\titlespacing*{\paragraph}{0pt}{0pt}{1em}
\newcommand{\periodafter}[1]{#1.}
\titleformat{\paragraph}[runin]{\bfseries}{\theparagraph}{}{\periodafter}
\newtheoremstyle{mystyle}{}{}{\itshape}{}{\bfseries}{.}{ }{\thmname{#1}\thmnumber{ #2}\thmnote{ \bfseries #3}}
\theoremstyle{plain}
\newtheorem{thm}{Theorem}[section]
\newtheorem{prop}[thm]{Proposition}
\newtheorem{lm}[thm]{Lemma}
\newtheorem{cor}[thm]{Corollary}
\theoremstyle{definition}
\DeclareMathOperator{\isom}{Isom}
\DeclareMathOperator{\hull}{hull}
\DeclareMathOperator{\core}{core}
\DeclareMathOperator{\diam}{diam}
\DeclareMathOperator{\psl}{PSL}
\DeclareMathOperator{\Stab}{Stab}
\DeclareMathOperator{\Int}{Int}
\definecolor{cof}{RGB}{219,144,71}
\definecolor{pur}{RGB}{186,146,162}
\definecolor{greeo}{RGB}{91,173,69}
\definecolor{greet}{RGB}{52,111,72}
\pgfplotsset{compat=1.14}
\numberwithin{equation}{section}
\title{\bfseries Existence of an exotic plane in an acylindrical 3-manifold}
\author{Yongquan Zhang}
\date{July 29, 2021}
\begin{document}
\maketitle

\begin{abstract}
Let $P$ be a geodesic plane in a convex cocompact, acylindrical hyperbolic 3-manifold $M$. Assume that $P^*=M^*\cap P$ is nonempty, where $M^*$ is the interior of the convex core of $M$. Does this condition imply that $P$ is either closed or dense in $M$? A positive answer would furnish an analogue of Ratner's theorem in the infinite volume setting.

In \cite{MMO2} it is shown that $P^*$ is either closed or dense in $M^*$. Moreover, there are at most countably many planes with $P^*$ closed, and in all previously known examples, $P$ was also closed in $M$.

In this note we show more exotic behavior can occur: namely, we give an explicit example of a pair $(M,P)$ such that $P^*$ is closed in $M^*$ but $P$ is not closed in $M$. In particular, the answer to the question above is no. Thus Ratner's theorem fails to generalize to planes in acylindrical 3-manifolds, without additional restrictions.
\end{abstract}

\section{Introduction}
This paper is a contribution to the study of topological behavior of geodesic planes in hyperbolic 3-manifolds of infinite volume.

\paragraph{Geodesic planes in hyperbolic 3-manifolds}
Let $M\cong\Gamma\backslash\mathbb{H}^3$ be an oriented, complete hyperbolic 3-manifold, presented as the quotient of hyperbolic space by a Kleinian group
$$\Gamma\subset\isom^+(\mathbb{H}^3)\cong\psl(2,\mathbb{C}).$$
Let $\Lambda\subset S^2$ be the limit set of $\Gamma$, and $\Omega=S^2-\Lambda$ the domain of discontinuity. The \emph{convex core} of $M$ is defined as
$$\core(M):=\Gamma\backslash\hull(\Lambda);$$
Equivalently, it is the smallest closed convex subset of $M$ containing all closed geodesics. Let $M^*$ be the interior of $\core(M)$. We say $M$ is \emph{convex cocompact} if $\overline{M}:=\Gamma\backslash(\mathbb{H}^3\cup\Omega)$ is compact, or equivalently $\core(M)$ is compact.

A \emph{geodesic plane} in $M$ is a totally geodesic isometric immersion $f:\mathbb{H}^2\to M$. We often identify $f$ with its image $P:=f(\mathbb{H}^2)$ and call the latter a geodesic plane as well. Given a geodesic plane $P$, write $P^*=M^*\cap P$.

%A natural question to ask is what the possibilities are for the closure of $P$ in $M$. When $M$ is compact (or even just has finite volume), $P$ is either closed or dense \cite{shah, ratner}. When $M$ is convex cocompact, however, this type of Ratner rigidity does not hold in general. For example, cylinders in qusifuchsian manifolds can have non-manifold, even fractal closures, as explained in \cite[Appx.~A]{MMO1}. In view of this, we restrict our attention to \emph{acylindrical} manifolds.

\paragraph{Planes in acylindrical manifolds}
In this paper, we study the topological behavior of geodesic planes in a convex cocompact, acylindrical hyperbolic 3-manifold $M$. The topological condition of being acylindrical means that the compact Kleinian manifold $\overline{M}$ has incompressible boundary and every essential cylinder in $\overline{M}$ is boundary parallel \cite{hyperbolization1}. When $M$ has infinite volume, the property of being acylindrical is visible on the sphere at infinity: $M$ is acylindrical if and only if $\Lambda$ is a Sierpie\'nski curve\footnote{A \emph{Sierpi\'nski curve} is a compact subset $\Lambda$ of the $2$-sphere $S^2$ such that $S^2-\Lambda=\cup_{i}D_i$ is a dense union of Jordan disks with $\diam(D_i)\to 0$ and $\overline{D_i}\cap\overline{D_j}=\emptyset$ for all $i\neq j$. See Figure~\ref{fig: acy limit set} for an example.}.

When $M$ has finite volume, a geodesic plane $P$ in $M$ is either closed or dense \cite{shah, ratner}. In the infinite volume case, if we assume furthermore $\core(M)$ has totally geodesic boundary, it is shown in \cite{MMO1} that any geodesic plane $P$ in $M$ is either closed, dense in $M$, or dense in an end of $M$. In other words, geodesic planes in such an $M$ do satisfy strong rigidity properties. In \cite{MMO2}, this is generalized to all convex cocompact acylindrical 3-manifolds if we restrict to the interior of the convex core $M^*$:
\begin{thm}[\cite{MMO2}]\label{thm: rigidity}
Let $M$ be a convex cocompact, acylindrical, hyperbolic 3-manifold. Then any geodesic plane $P$ intersecting $M^*$ is either closed or dense in $M^*$.
\end{thm}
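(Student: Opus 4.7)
The plan is to translate the problem into dynamics on the frame bundle $FM\cong\Gamma\backslash G$, where $G=\psl(2,\mathbb{C})$. A geodesic plane $P$ in $M$ lifts to an $H$-orbit in $FM$, where $H\cong\psl(2,\mathbb{R})$ is the stabilizer of a totally geodesic copy of $\mathbb{H}^{2}\subset\mathbb{H}^{3}$. The renormalized frame bundle $RFM\subset FM$, consisting of frames lying over $\core(M)$ whose underlying geodesic is recurrent in both directions, is compact because $M$ is convex cocompact. The hypothesis $P^{*}\neq\emptyset$ supplies a point $x\in FM$ above $M^{*}$ whose $H$-orbit I must analyze; set $Y=\overline{Hx}$. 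The goal is to prove that either $Y=Hx$ or $Y\supseteq\pi^{-1}(M^{*})$, where $\pi:FM\to M$ is the base projection.

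First I would show that if $Hx$ is not closed, there exists a sequence $h_{n}\in H$ leaving every compact subset of $H$ with $h_{n}x\to y\in Y\setminus Hx$; using bi-recurrence inside the compact set $RFM$ I may arrange $y\in RFM$. A standard thickening argument of Margulis--Dani--Mozes--Shah type then extracts a nontrivial element $g\in G\setminus H$ in the normalizer of the horocyclic subgroup $U\subset H$, such that $Y$ contains an additional $U$-orbit transverse to $Hx$ near $y$.

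Next I would invoke topological rigidity of the horocycle flow on $\Gamma\backslash G$: in the convex cocompact setting, every $U$-orbit meeting $RFM$ is either closed or dense in a $P$-orbit, where $P=UA$ and $A$ is the geodesic flow subgroup. Applying this to the transverse $U$-orbit produced above and saturating under $H$, I obtain an $H$-invariant family of transverse translates of $Hx$ inside $Y$, parametrized by a nontrivial arc. Iterating Step 1 on the new orbits and exploiting recurrence produces an $H$-invariant open subset of $\pi^{-1}(M^{*})$ inside $Y$, hence $Y\supseteq\pi^{-1}(M^{*})$ as desired.

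The main obstacle is ruling out intermediate closed $H$-invariant subsets, and this is where acylindricality becomes indispensable. The Sierpi\'nski-curve structure of $\Lambda$ implies that the round circles bounding components of $\Omega$ form a rigid family: no continuous one-parameter family of distinct circles tangent to $\Lambda$ can arise, since such a family would either create an essential cylinder in $\overline{M}$ or force $\Lambda$ to contain an arc, contradicting the Sierpi\'nski property. Translating this circle-packing rigidity into the dynamical statement that any proper closed $H$-invariant subset of $RFM\cap\pi^{-1}(M^{*})$ must be a single $H$-orbit---and verifying the corresponding classification of ergodic $U$-invariant measures in this infinite-volume setting---forms the technical heart of the proof and the step I expect to be hardest.
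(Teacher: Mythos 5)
This theorem is not proved in the paper at all: it is quoted from \cite{MMO2}, so the only meaningful comparison is with the argument there. Your opening moves do match the skeleton of the McMullen--Mohammadi--Oh approach: pass to $H$-orbits on the frame bundle $\Gamma\backslash G$, use compactness of the renormalized frame bundle $RFM$, and try to run a Margulis-type polynomial-divergence argument on a non-closed orbit to pick up transverse unipotent invariance. But two of your load-bearing steps fail.

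First, the ``topological rigidity of the horocycle flow'' you invoke in Step 3 --- that every $U$-orbit meeting $RFM$ is either closed or dense in a $P=UA$-orbit --- is not a theorem in the infinite-volume setting, and neither is the classification of ergodic $U$-invariant measures you defer to the end. Ratner and Dani--Margulis theory genuinely break down here: a typical geodesic spends most of its time outside the convex core, $U$-orbit closures in convex cocompact manifolds can be wild, and no such measure classification is known for $\Gamma\backslash\psl(2,\mathbb{C})$ at this generality. Supplying a substitute for this missing input is the entire content of \cite{MMO1, MMO2}. Second, you place acylindricality at the wrong end of the argument. It is not used to rule out intermediate closed invariant sets by rigidity of families of circles tangent to $\Lambda$ (indeed, the very paper you are reading exhibits, in the quasifuchsian setting, continuous families of planes closed in $M^*$, so any such rigidity must be quantitative and specific to the acylindrical case). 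In \cite{MMO2} acylindricality enters at the start, in two ways: it forces $\pi_1(P^*)$ to be nontrivial for every plane meeting $M^*$, and, via the Sierpi\'nski-curve structure of $\Lambda$ together with a uniform lower bound on moduli of annuli separating components of $\Omega$, it guarantees that the return times of the relevant $U$-orbits to the compact set $RFM$ are uniformly $k$-thick. That thickness is precisely the hypothesis that makes the divergence argument of your Step 2 yield useful transverse invariance; without it, Step 2 already stalls. So the proposal has the right setup but is missing the actual mechanism (thick recurrence extracted from acylindricality) and leans on a horocycle classification that does not exist in this setting.
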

As a matter of fact, when $P^*$ is dense in $M^*$, a stronger statement holds: $P$ is actually dense in $M$. As a complement, it is also shown in \cite{MMO2} that there are only countably many geodesic planes $P$ so that $P^*$ is nonempty and closed in $M^*$. It is natural to ask, \emph{\`a la} Ratner, if these countably many planes are well-behaved topologically in the \emph{whole} manifold. For example, we have the following question in \cite{MMO2}: if $P^*$ is closed in $M^*$, is $P$ always closed in $M$? Our main theorem answers this question:
\begin{thm}\label{thm: main}
There exists a convex cocompact, acylindrical, hyperbolic 3-manifold $M=\Gamma\backslash\mathbb{H}^3$ and a geodesic plane $P$ in $M$ so that $P^*$ is nonempty and closed in $M^*$ but $P$ is not closed in $M$.
\end{thm}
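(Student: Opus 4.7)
The plan is to construct an explicit pair $(M, P)$ with the desired behaviour; all the work lies in arranging the geometry of $\Gamma$ and the position of the circle $C\subset S^2$ defining $P$ so that both properties can be checked directly. Since $M$ is required to be convex cocompact and acylindrical, the limit set $\Lambda\subset S^2$ must be a Sierpi\'nski curve with complementary disks $\{D_i\}$ of pairwise disjoint closures and diameters tending to zero. In this language, the conditions we must realise become: (i) $|C\cap \Lambda|\ge 2$, so that $P^*\ne\emptyset$; (ii) the Hausdorff closure of the $\Gamma$-orbit $\Gamma\cdot C$ in the space of round circles on $S^2$ is strictly larger than $\Gamma\cdot C$, witnessing that $P$ is not closed in $M$; and (iii) every $C_\infty$ in that closure outside the orbit satisfies $|C_\infty\cap \Lambda|\le 1$, so that no additional plane is produced in $M^*$.

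To build $M$, I will take a specific convex cocompact acylindrical group $\Gamma$ with enough explicit structure to compute orbits of circles, for example one produced from a reflection group in a right-angled hyperbolic polyhedron, or by a Klein--Maskit-style amalgamation of Fuchsian pieces along a boundary surface. In such a setup, the stabilisers $H_i=\Stab_\Gamma(D_i)$ are quasi-Fuchsian surface groups that can be described by hand, and the combinatorics of the $\Gamma$-action on the collection $\{D_i\}$ is encoded by an explicit graph. The plane $P$ will be chosen as a specific geodesic plane meeting $M^*$: concretely, $C$ will meet $\Lambda$ in a Cantor set (giving (i)) and will be positioned so that, for some loxodromic $\gamma\in\Gamma$ preserving one of the disks $D_i$ and with attracting fixed point $p\in\partial D_i$ lying inside $C$, the iterates $\gamma^n C$ are pulled into $\overline{D_i}$ and converge in the Hausdorff topology to a circle $C_\infty\subset \overline{D_i}$ tangent to $\partial D_i$ at $p$.

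Granted the construction, property (ii) is immediate: $C_\infty\cap \Lambda=\{p\}$, so the limit plane $P_\infty$ is disjoint from $M^*$, yet by construction $\gamma^n P\to P_\infty$ in $M$ and $P_\infty\notin \Gamma\cdot P$. Property (i) is a one-shot verification on the chosen $C$. The main obstacle is property (iii), which requires controlling \emph{all} accumulation circles of $\Gamma\cdot C$, not just the specific $C_\infty$. The plan here is a dynamical argument on the $\Gamma$-action on the space of circles in $S^2$: given any divergent sequence $\gamma_n\in\Gamma$ with $\gamma_n C$ convergent as circles, one shows via compactness and the north--south dynamics of individual elements that, after passing to a subsequence, $\gamma_n C$ is eventually contained in $\overline{D_{i_n}}$ for a sequence of complementary disks. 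If $\diam(D_{i_n})\to 0$ the limit degenerates to a point and $|C'\cap\Lambda|\le 1$ automatically; otherwise one may replace $\gamma_n$ by cosets of $H_i$ for a fixed $i$ and reduce the problem to analysing the $H_i$-orbit of a circle $C$ inside $\overline{D_i}$, where the Fuchsian Ratner theorem applied to the surface $H_i\backslash D_i$ classifies closures and, combined with the explicit geometry of our choice of $(\Gamma,C)$, forces $C'$ to lie in $\Gamma\cdot C$ whenever $|C'\cap\partial D_i|\ge 2$. Rigging the construction so that this last alternative produces only orbit points, and not new circles, is what makes the example non-trivial and is where the bulk of the technical work will go.
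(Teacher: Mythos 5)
There is a genuine gap, and it sits at the heart of your mechanism for producing the limit circle. You propose to take a loxodromic $\gamma$ whose \emph{attracting} fixed point $p$ lies inside $C$ and to let $\gamma^n C$ converge to a nondegenerate circle $C_\infty$ with $C_\infty\cap\Lambda=\{p\}$. Neither half of this can happen. Normalizing $\gamma(z)=\lambda z$ with $|\lambda|>1$, one checks directly that if $C$ misses the repelling fixed point then $\gamma^n C$ degenerates to the single point $p$; such degeneration corresponds to the planes $\gamma^n\hull(C)$ exiting every compact set of $\mathbb{H}^3$ and therefore does \emph{not} witness that $P$ fails to be closed in $M$ (your property (ii)). The only way iterates of a single loxodromic yield a nondegenerate limit is when $C$ passes through the \emph{repelling} fixed point, and then every subsequential limit circle passes through \emph{both} fixed points of $\gamma$ --- hence meets $\Lambda$ in at least two points. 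So your property (iii), that every accumulation circle outside the orbit meets $\Lambda$ in at most one point, is incompatible with the mechanism you use to establish (ii). The correct way out, which your framework excludes, is to allow the limit circle to meet $\Lambda$ in exactly the two endpoints of a geodesic lying on $\partial\core(M)$: then the limit plane meets the convex core only in that boundary geodesic and still misses $M^*$. This is exactly what the paper arranges by sending $C$ through the repelling fixed point of (a lift of) a \emph{bending geodesic} $\eta$ of the convex core; the limit plane is a cylinder meeting $\core(M)$ precisely in $\eta$.

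Beyond this, the proposal defers essentially all of the content. Proving that $P^*$ is closed in $M^*$ is the hard half, and your plan (classify all accumulation circles by north--south dynamics, reduce to cosets of the disk stabilizers $H_i$, and invoke Fuchsian Ratner inside $\overline{D_i}$) is not carried out and is not obviously well posed, since $C$ is not contained in any $\overline{D_i}$ and the reduction to an $H_i$-orbit of circles in $\overline{D_i}$ is not justified. The paper instead works entirely with an explicit reflection group: it first builds the exotic plane in a quasifuchsian orbifold $N_R$ coming from a quadrilateral, where closedness of $P^*$ is verified by decomposing a fundamental domain of the plane into relatively compact pieces $\Pi_0,\Pi_1,\dots$ and showing they accumulate only on the bending geodesic; it then realizes $N_R$ as the boundary quasifuchsian cover of an acylindrical orbifold $\mathcal{O}(t)$ from a polyhedral deformation family, uses an intermediate-value argument in $t$ to make the extra half-plane close up downstairs, and finishes with Selberg's lemma to pass to a manifold. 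Your outline contains none of these verifications, and with the flawed limit mechanism the conditions you would need to ``rig'' cannot all be satisfied simultaneously.
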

Therefore, for this concrete acylindrical manifold $M$, at least one of the closed geodesic planes in $M^*$ is \emph{not} well-behaved topologically in the whole manifold. This supports the idea that the proper setting for generalizations of Ratner's theorem may be $M^*$ rather than $M$, as suggested in \cite{MMO2}.

\paragraph{Exotic planes and circles}
For simplicity, we call $P$ an \emph{exotic plane} of $M$ if $P^*$ is nonempty and closed in $M^*$ but $P$ is not closed in $M$. We now proceed to describe and visualize the example in Theorem~\ref{thm: main} from the perspective of the sphere at infinity.

Fix a presentation $M\cong\Gamma\backslash\mathbb{H}^3$. Any circle $C$ on the sphere at infinity $S^2$ determines a unique geodesic plane in $\mathbb{H}^3$, and in turn gives a geodesic plane $P$ in $M$. Conversely, given a geodesic plane $P$ in $M$, let $\tilde P$ be any lift to $\mathbb{H}^3$; its boundary at infinity is a circle $C\subset S^2$. We call $C$ a \emph{boundary circle} of $P$. Note that $\Gamma\cdot C$ gives all the boundary circles of $P$. An \emph{exotic circle} of $\Gamma$ is a boundary circle of an exotic plane in $M$.

\begin{figure}[htp]
\centering
\captionsetup{width=.8\linewidth}
\begin{minipage}{0.48\linewidth}
\centering
\begin{subfigure}[ht]{\linewidth}
\includegraphics[trim={4.5cm 5cm 4.5cm 5cm},clip,width=\textwidth]{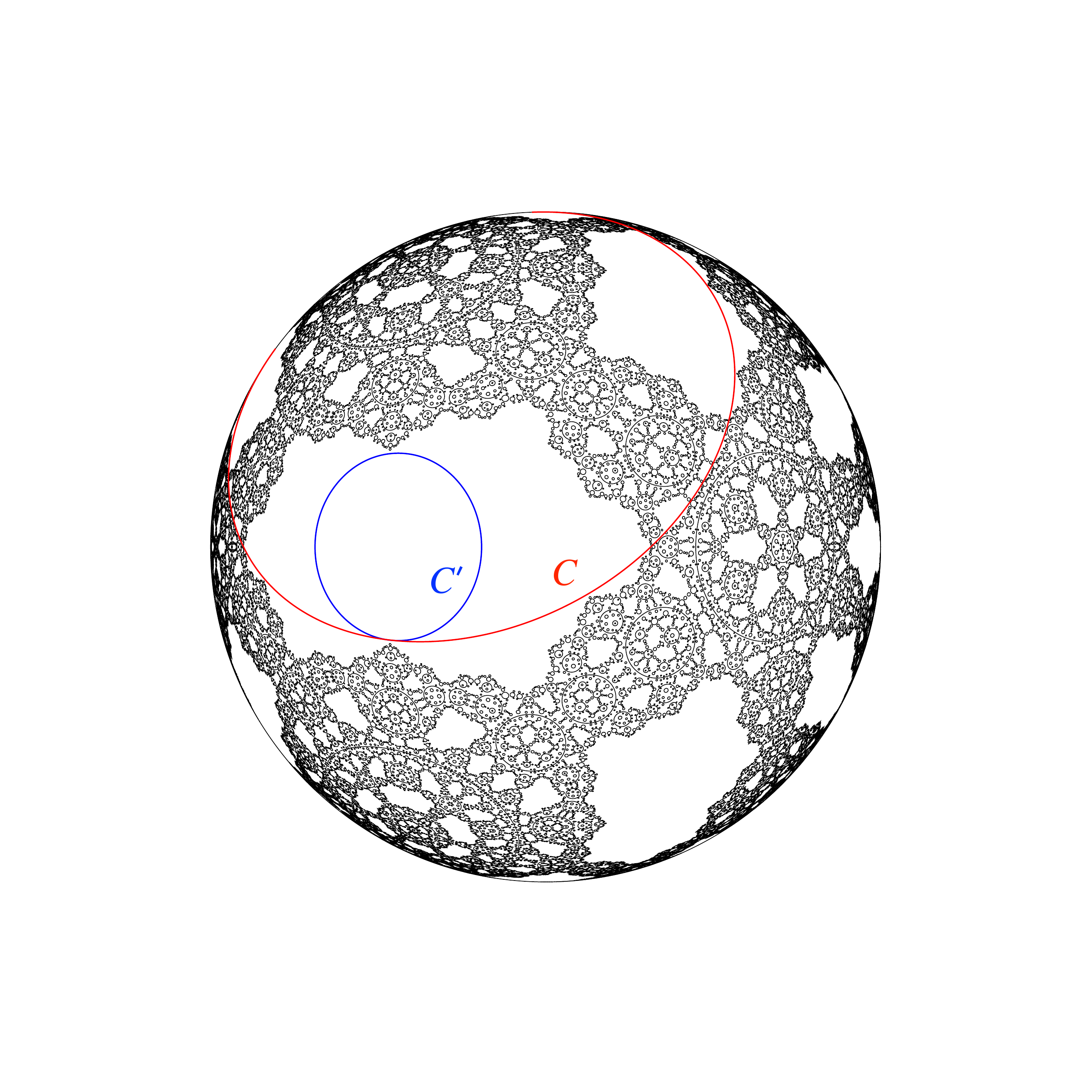}
\caption{The limit set $\Lambda$ and an exotic circle $C$}
\label{fig: acy limit set}
\end{subfigure}
\end{minipage}
%\hspace{0.1\linewidth}
\begin{minipage}{0.48\linewidth}
\centering
\begin{subfigure}[ht]{\linewidth}
\includegraphics[trim={4.5cm 5cm 4.5cm 5cm},clip,width=\textwidth]{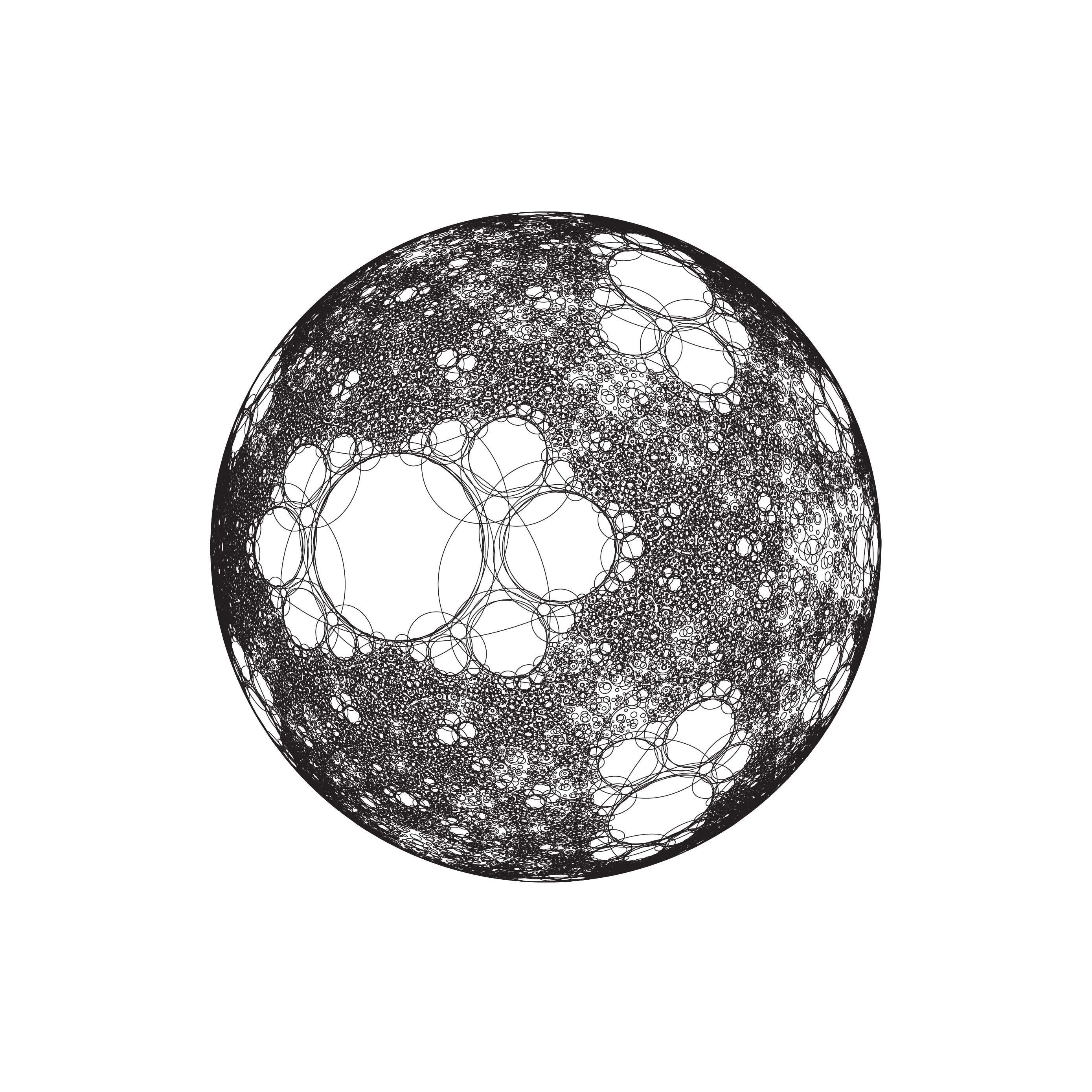}
\caption{The orbit $\Gamma\cdot C$}
\label{fig: acy exotic plane}
\end{subfigure}
\end{minipage}
\caption{The example in Theorem~\ref{thm: main} from the perspective of the sphere at infinity. Note that the orbit $\Gamma\cdot C$ of the exotic circle $C$ limits on $C'\notin\Gamma\cdot C$.}
\label{fig: acylindrical}
\end{figure}

Figure~\ref{fig: acylindrical} gives some visualizations of the example in Theorem~\ref{thm: main}: Figure~\ref{fig: acy limit set} depicts the limit set of $\Gamma$ with an exotic circle $C$ marked, and Figure~\ref{fig: acy exotic plane} shows the orbit of $C$ under $\Gamma$. Note that $C'$ (also marked in Figure~\ref{fig: acy limit set}) is not a circle in $\Gamma\cdot C$, but there exists a sequence $\gamma_i\in \Gamma$ so that $\gamma_iC\to C'$. This is reflected in our discussion of the geometry of $P$ (see below), and is quite visible from Figure~\ref{fig: acy exotic plane}.

\paragraph{Geometry of the exotic plane}
The exotic plane in Theorem~\ref{thm: main} comes from a particular orbifold $\mathcal{O}=\mathcal{O}(t_0)$ in the one-parameter family of acylindrical orbifolds $\mathcal{O}(t)$ constructed in \cite{double_lunchbox}. We will actually construct an exotic plane $P$ in the orbifold $\mathcal{O}$; this orbifold has a finite manifold cover $M$ by Selberg's lemma, and any lift of $P$ to $M$ is then exotic. For simplicity, we will mostly focus on describing the exotic plane in $\mathcal{O}$; properties of its lift to the manifold cover then follow.
%While other orbifolds in this family might also have finite manifold covers containing an exotic plane, any calculations would likely be very complicated, so we focus on this particular example. 

Note that an exotic plane \emph{must} accumulate on the convex core boundary. In our example, the plane $P$ is a nonelementary, convex cocompact surface with one infinite end; its restriction to $\core(\mathcal{O})$ cuts the infinite end into a crown with two tips. In particular $P^*=P\cap\Int(\core(\mathcal{O}))$ has finite area. The two tips of the crown wrap around and tends to the bending geodesic on the boundary of $\core(\mathcal{O})$. Finally, the closure $\overline{P}=P\cup P'$, where $P'$ is a closed geodesic plane contained in the infinite end of $M$. As a matter of fact, $P'$ is a cylinder whose core curve is precisely the bending geodesic. Note that $\overline{P}$ is not a closed suborbifold of $\mathcal{O}$, as it is not locally connected near $P'$.

Figure~\ref{fig: geometry} is a picture of $P$ near the convex core boundary in the quotient of $\mathcal{O}$ by a reflection symmetry. We remark that the behavior of $P$ near the convex core boundary only depends on the geometry of the corresponding quasifuchsian orbifold; see Theorems~\ref{thm: exotic circle} and \ref{thm: exotic}, and Figure~\ref{fig: prop proof} for details. 

\begin{figure}[htp]
\centering
\captionsetup{width=.6\linewidth}
\includegraphics{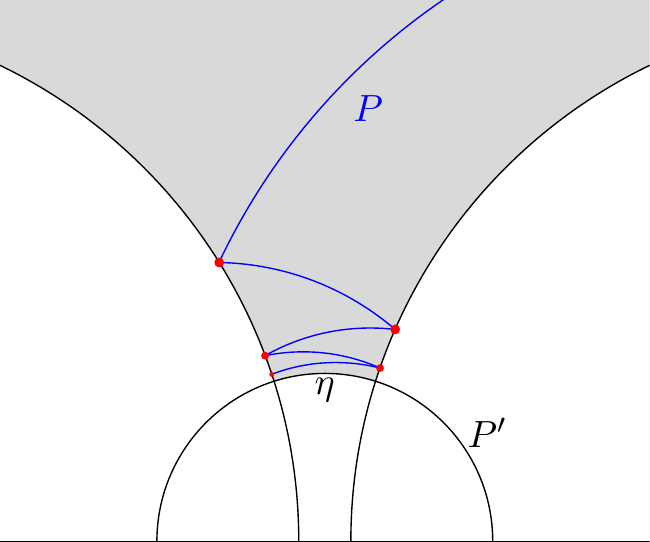}
\caption{A cross section near the convex core boundary, in a plane orthogonal to $P$ and containing $\eta$, the bending geodesic.}
\label{fig: geometry}
\end{figure}

\paragraph{Planes in cylindrical manifolds}
We remark that for \emph{cylindrical} manifolds, there is no analogue of Theorem~\ref{thm: rigidity} in general. For example, geodesic planes intersecting the convex core of a quasifuchsian manifold may have non-manifold, even fractal closures, as explained in \cite[Appx.~A]{MMO1}. Nevertheless, we can still consider exotic planes in a general convex cocompact hyperbolic 3-manifold $M$. In fact, we give examples of exotic planes in a concrete family of quasifuchsian manifolds in \S\ref{sec: quasifuchsian}.

In a general convex cocompact manifold $M$ with incompressible boundary, there may be uncountably many geodesic planes closed in $M^*$. For example, in \cite[\S2]{MMO2}, the authors described a continuous family of hyperbolic cylinders closed in $M^*$ for a quasifuchsian manifold $M$. Nevertheless, we show in the appendix:
\begin{thm}\label{thm: countable}
Let $M$ be a convex cocompact hyperbolic 3-manifold with incompressible boundary. Then there are at most countably many exotic planes in $M$.
\end{thm}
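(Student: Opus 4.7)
The plan is to associate to each exotic plane $P \subset M$ a \emph{limit plane} $P' \subset M\setminus M^*$, show that such limit planes lie in a countable union of $\Gamma$-orbits, and then show that only countably many exotic planes can share a given limit plane orbit.

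Passing to the universal cover $\mathbb{H}^3$, suppose $P$ is exotic with a lift $\tilde P$ bounded by a circle $C \subset S^2$. Since $P$ is not closed in $M$, there exist $\gamma_n \in \Gamma$ with $\gamma_n C \to C'$ for some circle $C'$ not in the $\Gamma$-orbit of any boundary circle of $P$; let $\tilde P'$ be the plane with boundary $C'$. The first key observation is that $\tilde P'$ cannot meet $\Int(\hull(\Lambda))$: any intersection point there would be a limit of points in $\gamma_n \tilde P \cap \Int(\hull(\Lambda))$, which after projecting to $M$ yields a point of $\overline{P^*} \setminus P^*$ lying in $M^*$, contradicting closedness of $P^*$ in $M^*$. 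Hence $\tilde P'$ is a \emph{support plane} of $\hull(\Lambda)$, touching $\partial \hull(\Lambda)$ only in a convex subset.

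The next step uses incompressibility: each component of $\partial\core(M)$ is a closed pleated hyperbolic surface with bending along a measured lamination whose complement has only finitely many two-dimensional components (``flat pieces''). Lifting to $\partial\hull(\Lambda)$, each flat piece spans a unique totally geodesic plane in $\mathbb{H}^3$, and these, together with their $\Gamma$-translates, yield countably many ``admissible'' support planes. Support planes of the opposite type touch $\partial\hull(\Lambda)$ only along a single bending leaf and a priori form a one-parameter family per leaf; these must be ruled out as limits $\gamma_n \tilde P \to \tilde P'$ via a transversality argument, using that each $\gamma_n \tilde P$ meets $\Int(\hull(\Lambda))$ in a two-dimensional open region which in the limit should force $\tilde P' \cap \overline{\hull(\Lambda)}$ to contain a two-dimensional set.

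Finally, for each $\Gamma$-orbit of admissible limit planes, I would show that only countably many exotic planes $P$ accumulate on it. The expectation is that such a $P$ is determined by discrete combinatorial data: how $P$ asymptotically spirals onto the flat piece of $\partial\hull(\Lambda)$ carried by $\tilde P'$, read off in the compact pleated boundary surface downstairs. The main obstacle is the second step, specifically the transversality argument excluding tangent-along-a-leaf limits: a Hausdorff limit of two-dimensional regions need not a priori contain a two-dimensional set, so one must invoke the specific convex-hull structure of $\hull(\Lambda)$ and the openness of $\Int(\hull(\Lambda))$ with some care. The countable-to-one step, while delicate, should then follow from compactness of the boundary surfaces and a standard combinatorial counting.
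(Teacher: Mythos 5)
Your first step is essentially sound, and can even be tightened: since $M^*$ is open, $\overline{P}\cap M^*=\overline{P^*}\cap M^*=P^*\subset P$, while $\tilde P'\cap\gamma\tilde P$ is at most one-dimensional when $C'\notin\Gamma\cdot C$, so $\tilde P'$ must miss $\Int(\hull(\Lambda))$. The fatal problem is your second step. You propose to \emph{rule out} limit planes that touch $\partial\hull(\Lambda)$ only along a single bending leaf. But these are exactly the limit planes that occur: in the examples of Theorems~\ref{thm: exotic circle}, \ref{cor: qf_geometry} and \ref{thm: exotic}, the exotic plane accumulates on a cylinder $P'$ whose lift $\tilde P'$ meets $\overline{\hull(\Lambda)}$ precisely in the bending geodesic $\tilde\eta$ --- a support plane tangent along one bending leaf. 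So the class you want to exclude is nonempty, and any argument excluding it would disprove Theorem~\ref{thm: main}. The heuristic fails for the reason you yourself flag: the two-dimensional regions $\gamma_n\tilde P\cap\hull(\Lambda)$ collapse in the Hausdorff limit onto the one-dimensional geodesic $\tilde\eta$ (in the proof of Theorem~\ref{thm: exotic circle}, the distance from $\overline{\Pi_i}$ to $\tilde\eta$ tends to $0$), so no two-dimensional set survives in $\tilde P'\cap\overline{\hull(\Lambda)}$. With step 2 gone you are left with a genuine one-parameter family of candidate limit planes along each bending leaf, and your final countable-to-one step is only an expectation, with no mechanism identified; the proposal does not close.

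For comparison, the paper's proof takes a more concrete route. It first quotes the result of \cite{MMO2} that $\pi_1(P^*)$ is nontrivial whenever $P^*$ is nonempty and closed in $M^*$, so $P^*$ is either a nonelementary surface (only countably many such, by the argument of \cite[Cor.~2.3]{MMO2}) or a cylinder around a closed geodesic $\gamma$. There are countably many closed geodesics, and the planes through a fixed $\gamma$ form a circle family $P_t$. A lemma in the spirit of your step 1, but phrased at infinity --- each arc of $C\setminus\{p,q\}$ lies in the closure of a single component of $\Omega$, using incompressibility --- shows that any one-sided accumulation point of the set of cylinder parameters is flanked by an interval of parameters whose planes are closed in $M$, hence not exotic. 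Thus the exotic parameters are isolated, hence countable. Note that the countability there comes from fixing the core geodesic, not from constraining the limit plane.
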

In particular, the union of all exotic planes in $M^*$ has measure zero.

\paragraph{Exotic horocycles}
A \emph{horocycle} in a complete hyperbolic 3-manifold $M$ is an isometrically immersed copy of $\mathbb{R}$ with zero torsion and geodesic curvature $1$. Here zero torsion means that it is contained in a geodesic plane. When $M$ has finite volume, the closure of any horocycle is a properly immersed submanifold, also a consequence of the general Ratner's theorem \cite{ratner}. This is extended to convex cocompact and acylindrical $M$ whose convex core has totally geodesic boundary \cite{acy_horo}, using the complete classification of geodesic planes in \cite{MMO1} mentioned above.

Our example in Theorem~\ref{thm: main} similarly shows that such rigidity does not hold in general for acylindrical manifolds. Indeed, the closure of a horocycle that is dense in the exotic plane is not a submanifold. Such a horocycle exists by e.g.\ \cite{horocycle}.
\paragraph{Questions}
We conclude the discussion by mentioning the following open questions.
\begin{enumerate}[topsep=3pt, itemsep=3pt]
\item The example in Theorem~\ref{thm: main} is fairly tame, but wilder examples may exist when the bending lamination is nonatomic. For example, is there an exotic plane $P$ so that $P^*$ has infinite area?
\item In both quasifuchsian and acylindrical cases, we give examples containing one exotic plane. In view of Theorem~\ref{thm: countable}, a natural question is if the result is ``sharp". That is, does there exist a convex cocompact hyperbolic 3-manifold with infinitely many exotic planes?
\end{enumerate}

\paragraph{Notes and references}
The rigidity results of \cite{MMO1, MMO2} have recently been extended to certain geometrically finite acylindrical manifolds in \cite{acy_geom_finite}.

This paper is organized as follows. Section~\ref{sec: quasifuchsian} is devoted to examples of exotic planes in a concrete family of quasifuchsian orbifolds. We then leverage the quasifuchsian examples to construct an acylindrical example, giving a proof of Theorem~\ref{thm: main} in Section~\ref{sec: construction}. Finally, we calculate explicitly the parameters for our example in Section~\ref{sec: computation}; they are needed to produce Figure~\ref{fig: acylindrical}.

\paragraph{Acknowledgements}
I would like to thank C.\ McMullen for enlightening discussions, and for sharing his example of a quasifuchsian exotic plane, which appears in Section~\ref{sec: quasifuchsian}. I also want to thank H.\ Oh for raising the question about exotic horocycles. Figure~\ref{fig: polyhedron_hyperbolic} was produced by \textbf{Geomview} \cite{geomview}, and computer scripts written by R.\ Roeder \cite{roe}. Figures~\ref{fig: acylindrical}, \ref{fig: limit set} and \ref{fig: quasifuchsian exotic plane} were produced by McMullen's program \textbf{lim} \cite{lim}. Finally, I want to thank the anonymous referee for their helpful comments and suggestions.

\section{Quasifuchsian examples}\label{sec: quasifuchsian}
In this section, we give examples of exotic planes in a concrete family of quasifuchsian orbifolds.

Fix an integer $n\ge3$. Let $R$ be a quadrilateral in the extended complex plane whose sides are either line segments or circular arcs, and whose interior angles are all $\pi/n$. Reflections in the sides of $R$ generate a discrete subgroup of $\psl(2,\mathbb{C})$, and let $\Gamma_R$ be its index 2 subgroup of orientation preserving elements. We note that $\Gamma_R$ is a quasifuchsian group, and the corresponding quasifuchsian orbifold $N_R:=\Gamma_R\backslash\mathbb{H}^3$ is homotopic to a sphere with 4 cone points of order $n$.

Denote by $C_i,1\le i\le4$ the circles on which the four sides of $R$ lie, so that $C_1$ and $C_3$ contain opposite sides. The corresponding hyperbolic planes in $\mathbb{H}^3$ are also denoted by $C_i$. To construct $N_R$, one can take two copies of the infinite ``tube" bounded by these four planes and identify corresponding faces. A fundamental domain for $\Pi_R$ is thus two copies of the tube, although for most discussions we consider the full reflection group with a fundamental domain simply being the tube.

Two copies of the geodesic segment perpendicular to both $C_1$ and $C_3$ glue up to a closed geodesic $\xi$ in $N_R$, and similarly the other two planes give a closed geodesic $\eta$. Let $2\cosh^{-1}(s)$ and $2\cosh^{-1}(t)$ be the lengths of $\xi$ and $\eta$ respectively. Then
\begin{prop}
For any quadrilateral $R$, we have $(s-1)(t-1)\le 4\cos^2(\pi/n)$, with equality if and only if $\Gamma_R$ is Fuchsian. When the inequality is strict, the convex core of $N_R$ is bent along $\xi$ on one side and $\eta$ on the other.
\end{prop}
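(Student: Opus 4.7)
My plan is to reduce the inequality to a $4\times 4$ determinant identity in the Minkowski model of the space of circles. Oriented circles in $S^2$ correspond to unit spacelike vectors $v\in\mathbb{R}^{3,1}$, with Lorentzian inner product recording the mutual configuration: $\langle v,w\rangle=\cos\theta$ for two circles meeting at angle $\theta$, and $\langle v,w\rangle=\pm\cosh d$ for two disjoint circles at hyperbolic distance $d$ (sign depending on relative orientation). I would orient each $C_i$ so that all four adjacent inner products equal $+\cos(\pi/n)$; a sign check at the vertices of $R$ then shows $\langle v_1,v_3\rangle=-s$ and $\langle v_2,v_4\rangle=-t$.

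Permuting the resulting Gram matrix $G$ into the order $(v_1,v_3,v_2,v_4)$ puts it in a clean $2\times 2$ block form, and a Schur-complement computation yields
\[
\det G=(1+s)(1+t)\bigl[(s-1)(t-1)-4\cos^2(\pi/n)\bigr].
\]
Since the Minkowski form has signature $(3,1)$, any four vectors in $\mathbb{R}^{3,1}$ satisfy $\det G\le 0$, with equality iff they are linearly dependent. Because $(1+s)(1+t)>0$, this immediately gives $(s-1)(t-1)\le 4\cos^2(\pi/n)$. Equality forces $v_1,\ldots,v_4$ into a proper subspace; its orthogonal complement then contains a nonzero vector $w$, and (ruling out the degenerate case of four circles through a common ideal point) $w$ is spacelike. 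Its dual circle is orthogonal to all $C_i$, and the hyperbolic plane it bounds is $\Pi_R$-invariant, showing $\Gamma_R$ is Fuchsian; the converse is immediate.

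For the bending statement, in the strict-inequality case the convex core of $N_R$ is $3$-dimensional with two pleated boundary components, and the reflection group $\Pi_R$ acts on $\hull(\Lambda)\subset\mathbb{H}^3$. I would analyze the pleated structure inside a fundamental polyhedron for $\Pi_R$: the natural geometric role played by the pair $\{C_1,C_3\}$ (with common perpendicular $\xi_0$) and by $\{C_2,C_4\}$ (with common perpendicular $\eta_0$) constrains the bending laminations on the two boundary components to be Dirac measures supported on $\xi$ and $\eta$ respectively. The rigidity of this construction, together with the symmetries of $\Pi_R$, rules out bending along the third essential simple closed curve on the $4$-cone-point sphere.

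The main obstacle is controlling signs in the Lorentzian inner products precisely enough to land on $(s-1)(t-1)\le 4\cos^2(\pi/n)$ rather than a sign-flipped variant such as $(s+1)(t+1)\le 4\cos^2(\pi/n)$; this amounts to a careful orientation check at each vertex of $R$. The bending statement, while consistent with the Fuchsian limit, demands a detailed local description of the pleated boundary in the fundamental polyhedron to verify that exactly $\xi$ arises on one side and $\eta$ on the other.
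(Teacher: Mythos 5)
The paper does not actually prove this proposition; it is quoted from \cite{double_lunchbox} (Proposition 4.1 and Lemma 4.3 there), so there is no internal proof to compare against. Judged on its own terms, the first part of your proposal is sound: with the orientation convention you describe, the Gram matrix in the order $(v_1,v_3,v_2,v_4)$ has the block form $\left(\begin{smallmatrix}A&B\\B&D\end{smallmatrix}\right)$ with $B=cJ$, $J$ the all-ones matrix, and the Schur complement computation does give $\det G=(1+s)(1+t)\bigl[(s-1)(t-1)-4\cos^2(\pi/n)\bigr]$; since the Gram determinant of four vectors in a form of signature $(3,1)$ is $\le 0$ with equality iff they are dependent, and $(1+s)(1+t)>0$, the inequality follows. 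Your handling of the equality case is also correct: the degenerate possibility that $W^\perp$ is spanned by a null vector is excluded because it would force all four circles through a common point, contradicting $\langle v_1,v_3\rangle=-s$ with $s>1$ (i.e. $C_1\cap C_3=\emptyset$), and $W$ cannot be spacelike for the same reason, so the orthogonal complement is spacelike and yields an invariant circle. The sign issue you flag is real but resolvable: with each $C_i$ oriented so that the corresponding disk contains $R$, one gets all adjacent products equal to $-\cos(\pi/n)$ and $\langle v_1,v_3\rangle=-s$, $\langle v_2,v_4\rangle=-t$; flipping $v_1,v_3$ then gives your normalization, and in any case $\det G$ only depends on the sign pattern up to these flips.

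The genuine gap is the bending statement. What you write for it is a description of what needs to be shown, not an argument: saying that the ``natural geometric role'' of the pairs $\{C_1,C_3\}$ and $\{C_2,C_4\}$ ``constrains'' the bending laminations to be Dirac masses on $\xi$ and $\eta$ does not rule out, for instance, a lamination with irrational slope on the four-cone-point sphere, nor does it identify which curve appears on which side. A proof has to exhibit actual support planes of $\hull(\Lambda)$: e.g.\ show that the plane spanned by $\tilde\eta$ and its reflection $\tau_2\tilde\eta$ has the entire limit set (weakly) on one side and meets $\Lambda$ exactly in the closure of the orbit of the fixed points of $\tilde\eta$, so that the corresponding boundary component of the convex hull is tiled by flat pieces meeting along the orbit of $\tilde\eta$, and similarly for $\xi$ on the other side. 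This is a concrete separation statement about circles and the limit set (the kind of computation carried out in \cite{double_lunchbox}), and nothing in your proposal supplies it. As written, the third sentence of the proposition remains unproved.
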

This proposition is a combination of Proposition~4.1 and Lemma 4.3 in \cite{double_lunchbox}. See Figure~\ref{fig: quasifuchsian} for some visualizations with $n=3$.
\begin{figure}[htp]
\centering
\captionsetup{width=.8\linewidth}
\begin{subfigure}[t]{0.48\linewidth}\centering
\includegraphics[trim={3.5cm 10cm 3.5cm 10cm},clip,width=\textwidth]{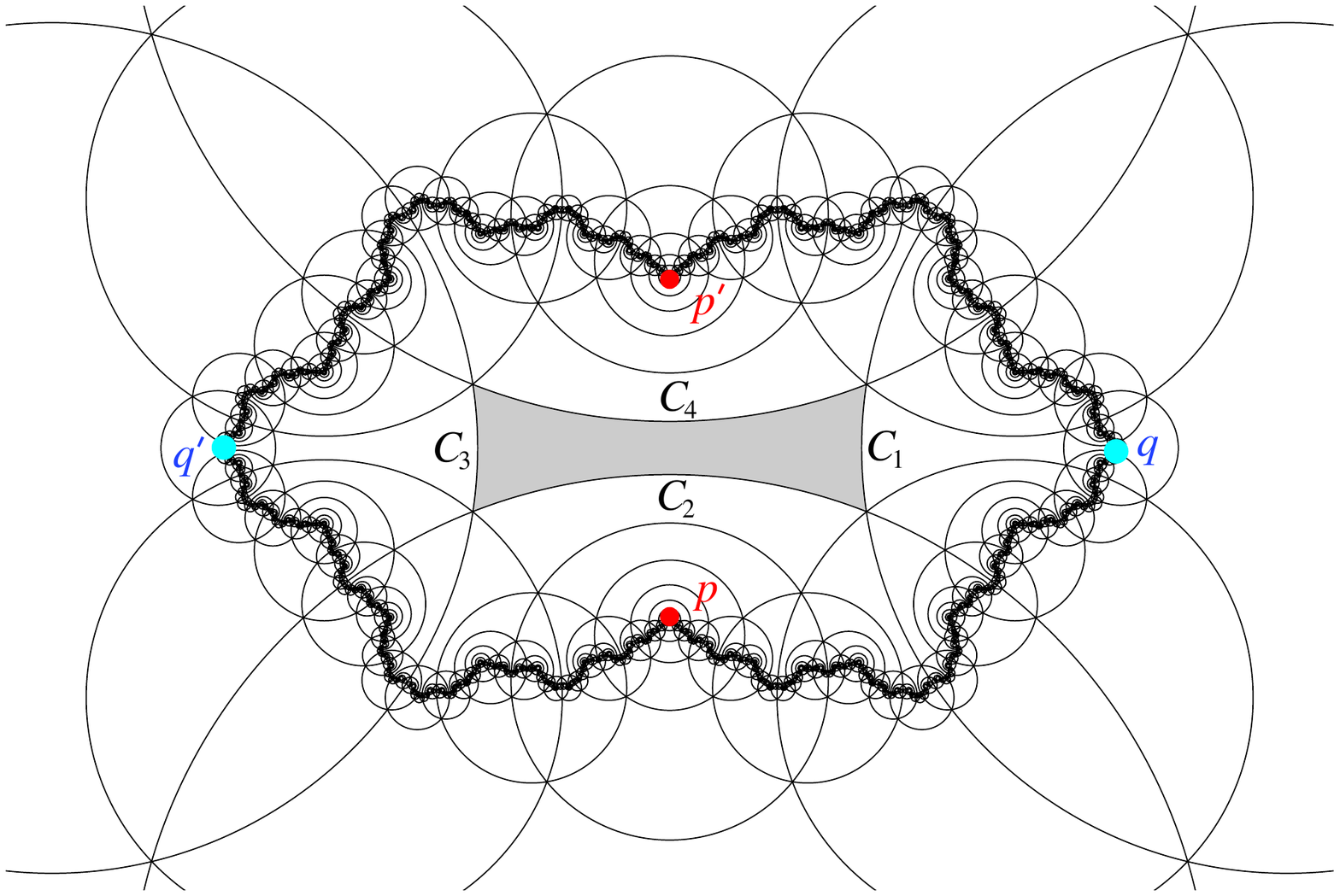}
\caption{Quadrilateral $R$ in gray and limit set of $\Gamma_R$.}
\label{fig: limit set}
\end{subfigure}
\begin{subfigure}[t]{0.48\linewidth}
\centering
\includegraphics{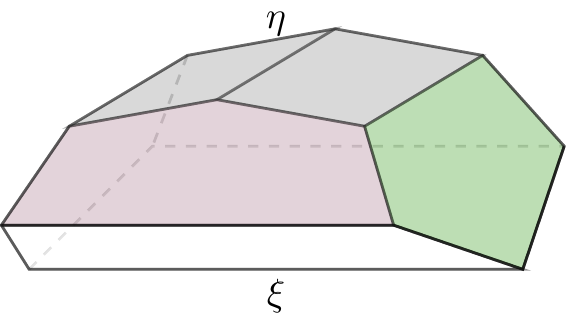}
\caption{A combinatorial picture of $\core(N_R)$}
\label{fig: quasifuchsian convex core}
\end{subfigure}
\caption{An example of the main construction of a quasi-Fuchsian group $\Gamma_R$ from a quadrilateral $R$, for which we determine the bending lamination $\eta,\xi$ facing toward and away from $R$, respectively. A lift of $\eta$ has end points $p,p'$, and a lift of $\xi$ has end points $q,q'$.}
%\caption{An example of $N_R$. Here $\eta$ is the bending geodesic on the side of the domain of discontinuity containing $R$, and $\xi$ the bending geodesic on the other side. A lift of $\eta$ has end points $p,p'$, and a lift of $\xi$ has end points $q,q'$.}
\label{fig: quasifuchsian}
\end{figure}
Note that Figure~\ref{fig: limit set} is drawn so that $R$ is centered at the origin, and symmetric across real and imaginary axes. The end points $p,p'$ of a lift $\tilde\eta$ of $\eta$ then lie on the imaginary axis; the corresponding hyperbolic element, also denoted by $\tilde\eta$, is given by reflection across $C_2$ followed by refection across $C_4$. Similarly, the end points $q,q'$ of a lift $\tilde\xi$ of $\xi$ lie on the real axis, see Figure~\ref{fig: limit set}.

\begin{figure}[htp]
\centering
\begin{minipage}{0.48\linewidth}
\begin{subfigure}[ht]{\linewidth}
\includegraphics[trim={3.5cm 10cm 3.5cm 10cm},clip,width=\textwidth]{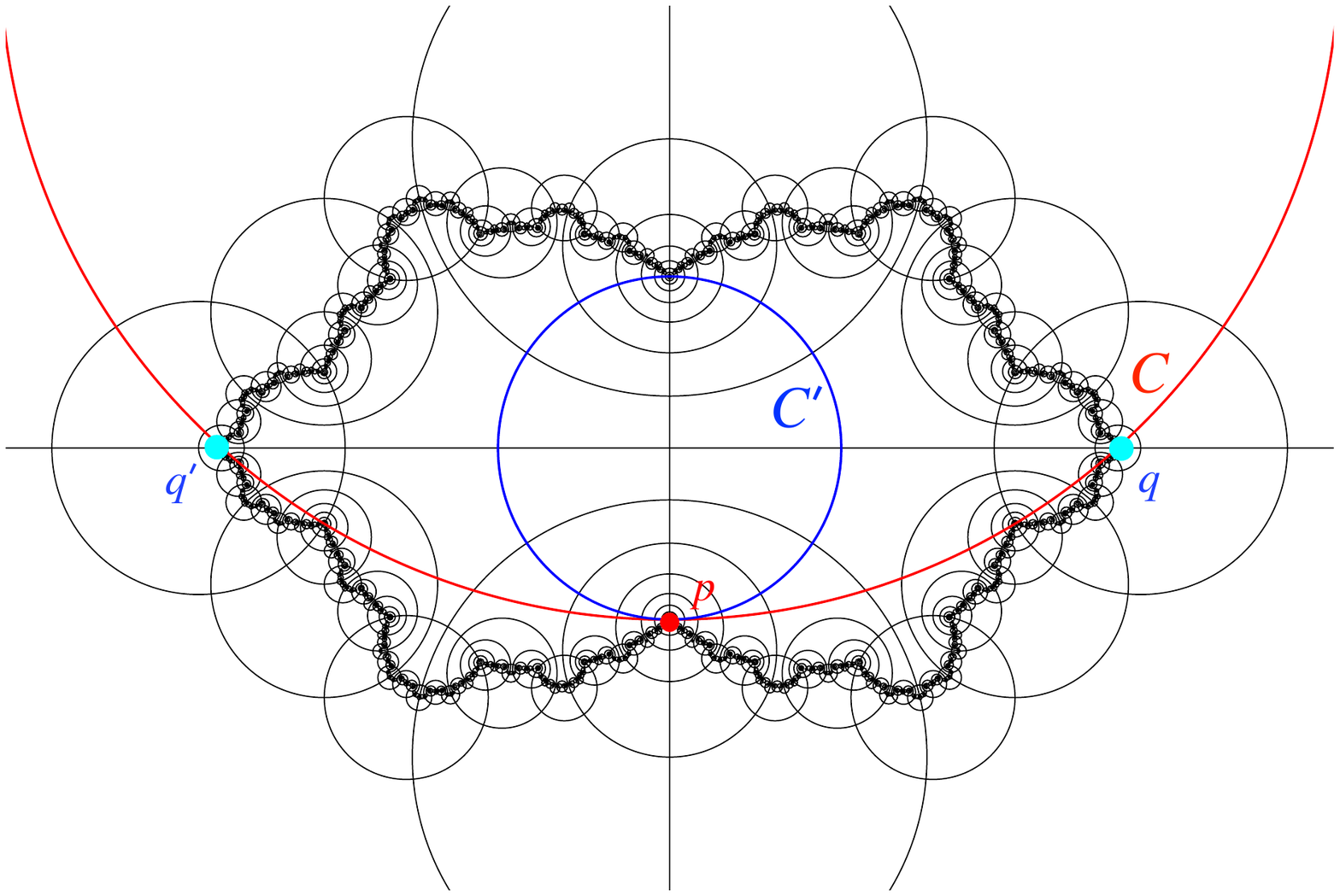}
\caption{Axes of bending geodesics and an exotic circle}
\label{fig: axes}
\end{subfigure}
\end{minipage}
\begin{minipage}{0.48\linewidth}
\centering
\begin{subfigure}[ht]{\linewidth}
\includegraphics[trim={3.5cm 10cm 3.5cm 10cm},clip,width=\textwidth]{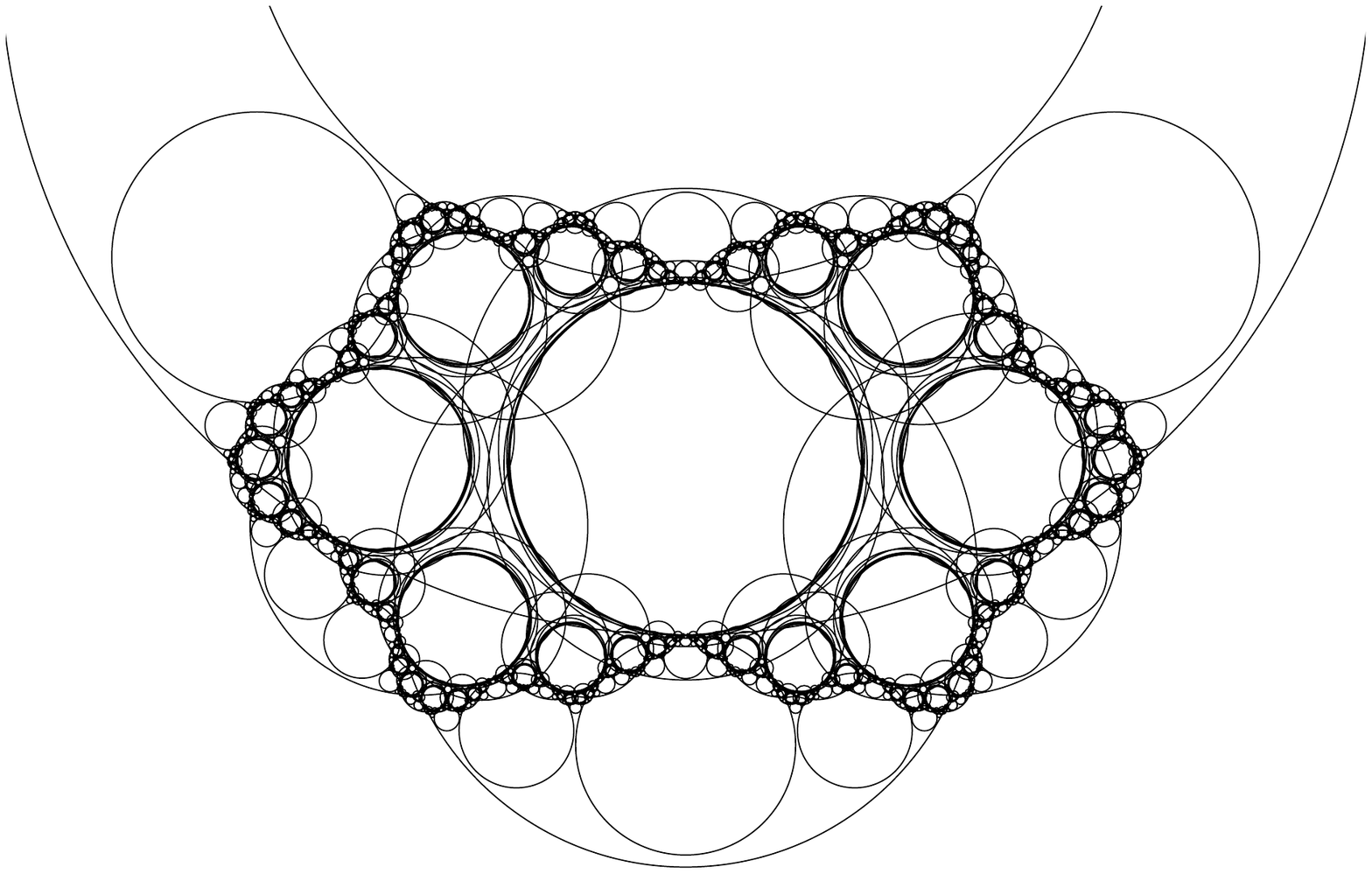}
\caption{Orbit of the exotic circle}
\label{fig: exotic plane}
\end{subfigure}
\end{minipage}
\caption{An exotic circle for quasifuchsian group $\Gamma_R$}
\label{fig: quasifuchsian exotic plane}
\end{figure}

Let $\Lambda$ be the limit set of $\Gamma_R$. Take the circle $C$ passing through the end points $q,q'$ of $\tilde\xi$ and the repelling fixed point $p$ of $\tilde\eta$; see Figure~\ref{fig: axes}. It is easy to see that $p$ is an isolated point in $C\cap\Lambda$. As $C$ is stabilized by reflections in $C_1$ and $C_3$, it also passes through the orbit of $p$ under the group generated by these reflections. It is clear that $C\cap\Lambda$ consists of points in this orbit together with the end points $q,q'$ of $\tilde\xi$. We have

\begin{thm}\label{thm: exotic circle}
For any quadrilateral $R$, the circle $C$ is exotic. That is, the corresponding geodesic plane $P$ is closed in $N_R^*$ but not in $N_R$.
\end{thm}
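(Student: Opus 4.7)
My approach is to verify the three defining properties of an exotic circle: (i) $P^*\ne\emptyset$, (ii) $P^*$ is closed in $N_R^*$, and (iii) $P$ is not closed in $N_R$. The analysis leverages the explicit description $C\cap\Lambda=\{q,q'\}\cup\langle r_{C_1},r_{C_3}\rangle\cdot p$ (an infinite set accumulating on $C$ only at $q,q'$), together with the symmetry of $R$ under complex conjugation, which forces $p'=\bar p\in C$.

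For (i), I would consider the geodesic quadrilateral in the plane $H\subset\mathbb{H}^3$ bounded by $C$, with vertices $q,q',p,p'$. Its pair of opposite sides are the bending geodesics $\tilde\xi=qq'$ and $\tilde\eta=pp'$, which by the proposition above lie on opposite components of the pleated boundary $\partial\hull(\Lambda)$. Hence any diagonal of this quadrilateral must pass from one component to the other through $\Int(\hull(\Lambda))$, proving $P^*\ne\emptyset$.

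For (iii), I would take the divergent sequence $\gamma_n=\tilde\eta^{-n}$. Since $C$ contains both fixed points $p,p'$ of $\tilde\eta$, every $\gamma_n C$ also passes through $p,p'$, while $\gamma_n(q),\gamma_n(q')\to p$ with tangent direction at $p$ rotating by $\arg(\mu)$ per step, where $\mu$ is the multiplier of $\tilde\eta^{-1}$ at $p$. In the non-Fuchsian regime $\arg(\mu)\ne 0$, passing to a convergent subsequence yields a limit circle $C'$ through $p,p'$. Since $\Lambda$ is a fractal Jordan curve and $C'$ is a round circle distinct from $\Lambda$, the intersection $C'\cap\Lambda$ is finite, whereas every circle in $\Gamma_R\cdot C$ meets $\Lambda$ in the infinite set $\gamma\cdot(C\cap\Lambda)$. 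Hence $C'\notin\Gamma_R\cdot C$, so $P$ is not closed in $N_R$.

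For (ii), I would classify all subsequential limits $C'=\lim\gamma_n C$ with $\gamma_n\to\infty$ in $\Gamma_R$. By source-sink dynamics on $S^2$, after subsequence there exist $a,r\in\Lambda$ with $\gamma_n\to a$ locally uniformly off $r$; nondegeneracy of $C'$ forces $r\in C\cap\Lambda$, while every other point of $C\cap\Lambda$ gets pushed to $a$, so $C'\cap\Lambda\subseteq\{a,\lim\gamma_n(r)\}$. Because $\Lambda$ is a Jordan curve met by $C'$ in at most two points, with the contact controlled by the contracting dynamics, $C'$ bounds an open disk in $S^2\setminus\Lambda$, making $H_{C'}$ a supporting plane of $\hull(\Lambda)$; hence $H_{C'}\cap\Int(\hull(\Lambda))=\emptyset$ for every accumulation plane, yielding closedness of $P^*$ in $N_R^*$. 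The hard part will be this classification: carefully verifying the supporting-plane conclusion across all cases for $r$ (whether in $\{q,q'\}$ or in the dihedral orbit of $p$), and confirming that the contact with $\Lambda$ is sufficiently tangential to force $C'$ to bound a disk disjoint from $\Lambda$.
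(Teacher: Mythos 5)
Your proposal rests on two factual errors about the configuration, and its key step for closedness in $N_R^*$ does not hold. First, $C$ does \emph{not} pass through $p'$. The symmetry under complex conjugation gives $p'=\bar p$, but conjugation carries the circle through $\{q,q',p\}$ to the circle through $\{q,q',p'\}$, which is a different circle unless $C$ happens to be centered at the origin; for a general $R$ it is not (and the paper's description of $C\cap\Lambda$ as $\{q,q'\}$ together with the $\langle r_{C_1},r_{C_3}\rangle$-orbit of $p$ already excludes $p'$). This undermines your argument for (i) (there is no geodesic quadrilateral $qq'pp'$ in the plane over $C$) and the whole setup for (iii). Second, $\tilde\eta$ is the composition of reflections in the two \emph{disjoint} circles $C_2$ and $C_4$, hence a purely hyperbolic translation along their common perpendicular: its multiplier is real even when $\Gamma_R$ is not Fuchsian, so the ``rotating tangent direction'' mechanism never occurs. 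The actual mechanism (and the paper's) is that $C$ passes through only the \emph{repelling} fixed point $p$, so $\tilde\eta^{n}C$ genuinely converges to the circle $C'$ through $p$ and $p'$ tangent to $C$ at $p$; one then notes $C'\notin\Gamma_R\cdot C$ because the plane over $C'$ is disjoint from $N_R^*$ while $P^*\neq\emptyset$. Your alternative justification --- that a round circle must meet a fractal Jordan curve in a finite set --- is false in general (such an intersection can even be a Cantor set).

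For (ii), the central claim $C'\cap\Lambda\subseteq\{a,\lim\gamma_n(r)\}$ does not follow from $\gamma_n\to a$ locally uniformly off $r$: the maps $\gamma_n$ are strongly expanding near $r$, so a short arc of $C$ at $r$ is spread over a long arc of $C'$, and $C'\cap\Lambda$ can be much larger than the set of limits of $\gamma_n(C\cap\Lambda)$. Nor would $|C'\cap\Lambda|\le 2$ imply that $C'$ bounds a disk missing $\Lambda$ --- it could cross $\Lambda$ transversally at those points, in which case the corresponding plane meets $\Int(\hull(\Lambda))$. The paper avoids this abstract classification entirely and argues in a fundamental domain: it splits $\tilde P$ along $\tilde\xi$, shows one half descends properly to a half-cylinder in an end, and decomposes the relevant portion of the other half into countably many relatively compact pieces $\Pi_0,\Pi_1,\dots$ cut out by the orbit of $C_2,C_4$, each landing in $\core(N_R)$, with the distance from $\overline{\Pi_i}$ to $\tilde\eta$ tending to $0$; hence $P^*$ accumulates only on the bending geodesic $\eta\subset\partial\core(N_R)$ and is closed in $N_R^*$. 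As written, your dynamical approach would need substantial additional geometric input to close these gaps.
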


\begin{proof}
For simplicity, we will suppress the subscript $R$. Since $C$ passes through the repelling fixed point of $\tilde\eta$, the sequence $\tilde{\eta}^n\cdot C$ tends to a circle $C'$ passing through both end points of $\tilde\eta$; in particular, the geodesic plane $P$ determined by $C$ is not closed in $N$, and accumulates on the plane $P'$ determined by $C'$. This is clearly visible in Figure~\ref{fig: exotic plane}, where the orbit of $C$ under $\Gamma$ is drawn.

\begin{figure}[ht!]
\centering
\begin{subfigure}[ht]{\linewidth}
\captionsetup{width=.5\linewidth}
\centering
\includegraphics{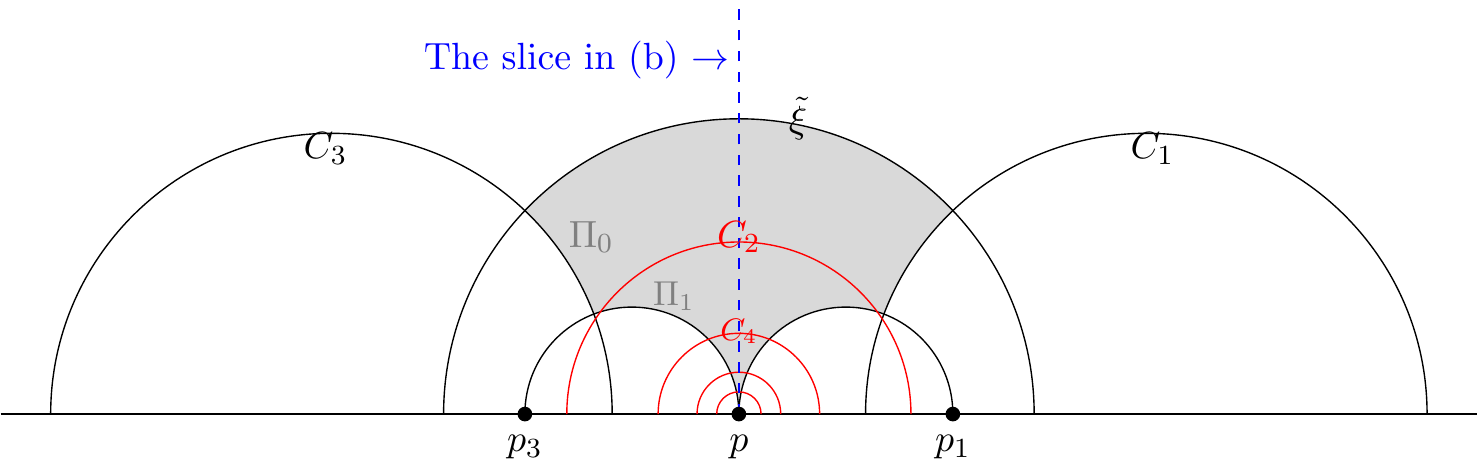}
\caption{A fundamental domain inside $\tilde P$. The red circles mark where it meets $C_2,C_4$ and their orbits}
\label{fig: fundamental domain}
\end{subfigure}
\vspace{10pt}

\begin{subfigure}[ht]{\linewidth}
\centering
\includegraphics{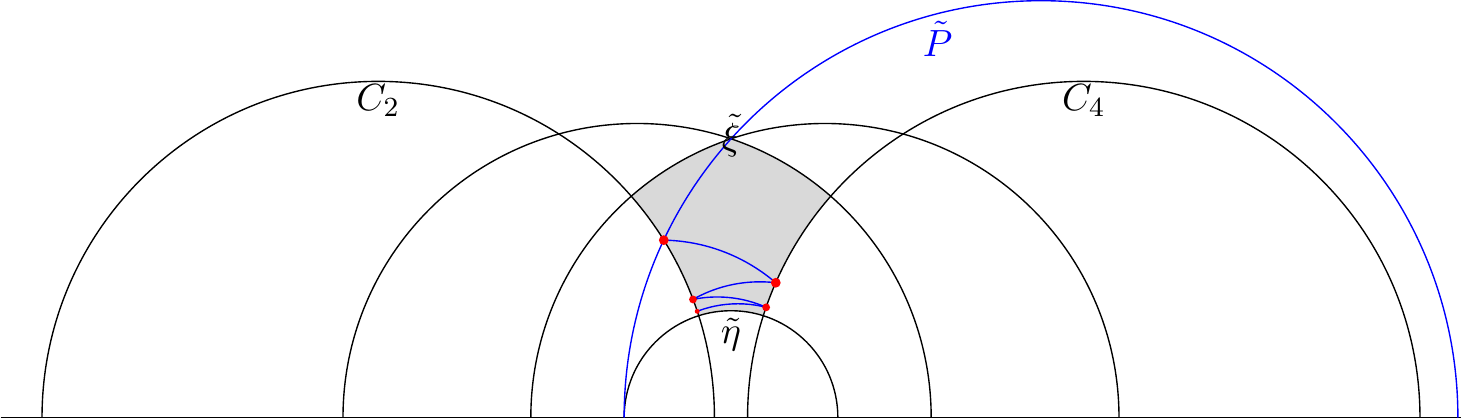}
\caption{A perpendicular slice of $\tilde P$}
\label{fig: billiard}
\end{subfigure}
\caption{Some visualizations for the proof of Thm.~\ref{thm: exotic circle}}
\label{fig: prop proof}
\end{figure}

It remains to show $P^*$ is closed in $N^*$. The hyperbolic plane $\tilde P$ in $\mathbb{H}^3$ determined by $C$ is divided into two half planes by $\tilde\xi$. One half descends to a half cylinder contained in an end of $N$; indeed, this half plane is stabilized by $\tilde\xi$ and a fundamental domain for the action of this hyperbolic element is compact in $\mathbb{H}^3\cup\Omega$, where $\Omega$ is the domain of discontinuity of $\Gamma$, and thus our assertion follows from proper discontinuity of the action. 

For the other half, again since it is stabilized by $\tilde\xi$, it suffices to consider a fundamental domain under the action of this hyperbolic element. As a matter of fact, we may even consider a fundamental domain under the full reflection group. One choice of this is the portion of the half plane sandwiched between $C_1$ and $C_3$. Let $p_1$ and $p_3$ be the images of $p$ under reflections across $C_1$ and $C_3$ respectively. The geodesic with end points $p$ and $p_1$ descends to a complete geodesic contained in the convex core boundary, and same for the geodesic with end points $p$ and $p_3$. Hence to understand $P^*$ we only need to consider the part of the fundamental domain mentioned above that lies between the geodesics $pp_1$ and $pp_3$; see the gray region in Figure~\ref{fig: fundamental domain}. We denote this part of $\tilde P$ by $\Pi$.

\iffalse
Let $l$ be the intersection $\Pi$ and the hyperbolic plane whose boundary at infinity is $i\mathbb{R}$ (in Figure~\ref{fig: fundamental domain}, this is part of the dotted blue line starting at its intersection with $\tilde\xi$ and ending at $p$).

Given a sequence of points $\{x_i\}\subset P^*$ with limit $x\in N^*$, let $\tilde{x_i}$ be a lift of $x_i$ contained in $\Pi$.
\fi
This portion is divided by orbits of $C_2$ and $C_4$ into countably many pieces $\Pi_0, \Pi_1,\ldots$ (see Figure~\ref{fig: fundamental domain}). Each piece is relatively compact (since its closure is disjoint from the boundary at infinity) and descends to a piece contained entirely in $\core(N)$.

Let $l$ be the intersection of $\Pi$ and the hyperbolic plane whose boundary at infinity is $i\mathbb{R}$ (in Figure~\ref{fig: fundamental domain}, this is part of the dotted blue line starting at its intersection with $\tilde\xi$ and ending at $p$). Let $l_i$ be the segment of $l$ contained in each piece $\overline{\Pi_i}$. Since $l$ and $\tilde\eta$ shares an end point, the distance\footnote{That is, the maximal distance between two points from each set.} between each segment $l_i$ and $\tilde\eta$ goes to $0$. In $N$, we conclude that the distance between the projection of $l_i$ and $\eta$ goes to zero; see Figure~\ref{fig: billiard}.

Finally, the distance between $\overline{\Pi_i}$ and $\tilde\eta$ goes to $0$. Indeed, since $\Pi_i$ is bounded by $p_1p$ and $p_3p$ (at least when $i$ is large enough), the distance of a point in $\overline{\Pi_i}$ to the segment $l_i$ goes to zero. Together with the discussion in the last paragraph, this gives the desired claim. We conclude that in $N$, the distance between the projection of each $\overline{\Pi_i}$ and $\eta$ goes to zero.

This implies that $P^*$ only accumulates on $\eta$, so it is closed in $N^*$.
\end{proof}
The proof above gives a clear picture of the topology of $P^*$. Indeed, we described a fundamental domain under the action of the full reflection group (the gray region in Figure~\ref{fig: fundamental domain}). Two pieces of this gives a crown with two tips, and $P^*$ is precisely the interior of this crown properly immersed in $N^*_R$.

It is also easy to understand the behavior of $P$ in $N_R$. Recall that $P'$ is the geodesic plane in $N_R$ determined by $C'$, as described in the proof above. We have:
%and $P$ only accumulates on this plane. These assertions can be proved using similar arguments to the proof above. In particular we have $\overline{P}=P\cup P'$. Any neighborhood of a point on $P'$ intersects $P$ in infinitely many pieces. In particular we have
\begin{thm}\label{cor: qf_geometry}
The geodesic plane $P$ only accumulates on $P'$ (i.e.\ $\overline{P}=P\cup P'$), where $P'$ is a properly immersed hyperbolic cylinder whose intersection with $\core(N_R)$ is the bending geodesic $\eta$. Moreover, $\overline{P}$ is not locally connected, and thus not a suborbifold of $N_R$.
\end{thm}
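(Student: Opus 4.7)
From the proof of Theorem \ref{thm: exotic circle} we have $\tilde\eta^n\cdot C\to C'$, with $C'$ passing through both fixed points $p,p'$ of $\tilde\eta$, so $\tilde P'$ is $\tilde\eta$-invariant. The first key step is to identify $\tilde P'$ as one of the two support planes to $\hull(\Lambda)$ along the bending geodesic $\tilde\eta$: the crown pieces $\Pi_i$ on the $\eta$-side of $\tilde P$ lie in $\hull(\Lambda)$ and wrap around $\tilde\eta$ under $\tilde\eta^n$, and their limiting angular position along $\tilde\eta$ is realized by a bending plane. Granting this, $\tilde P'\cap\hull(\Lambda)=\tilde\eta$ and therefore $C'\cap\Lambda=\{p,p'\}$. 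The $\Gamma$-stabilizer of $\tilde P'$ then preserves $\{p,p'\}$, so it coincides with $\langle\tilde\eta\rangle$ up to a finite subgroup coming from the orbifold symmetries; hence $P'$ is a hyperbolic cylinder. Convex cocompactness of $\Gamma$ combined with $\tilde P'\cap\hull(\Lambda)=\tilde\eta$ then yields proper immersion of $P'$ in $N_R$, and $P'\cap\core(N_R)$ is the projection of $\tilde\eta$, namely the bending geodesic $\eta$.

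\textbf{The closure $\overline P=P\cup P'$.} Given $x\in\overline P\setminus P$ and $x_i\in P$ with $x_i\to x$, lift and pass to a subsequence to get $\tilde x_i\in\gamma_i\tilde P$ with $\tilde x_i\to\tilde x$ covering $x$; then $\gamma_i\tilde P$ converges to a plane $\tilde P_\infty\ni\tilde x$. Since $P^*$ is closed in $N_R^*$ by Theorem \ref{thm: exotic circle}, $\tilde P_\infty$ cannot be a $\Gamma$-translate of $\tilde P$, so a genuine wrap-around must occur. After translating the $\tilde x_i$ into a fixed fundamental domain for $\Gamma$ on $\tilde P$, the decomposition of $\tilde P$ into crown pieces $\Pi_j$ from the proof of Theorem \ref{thm: exotic circle} forces $\tilde x_i\in\Pi_{j_i}$ with $j_i\to\infty$; combining the accumulation $\Pi_{j_i}\to\tilde\eta$ with the same angular-position argument used for $\tilde P'$ above identifies $\tilde P_\infty$ with a $\Gamma$-translate of $\tilde P'$, so $x\in P'$.

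\textbf{Failure of local connectedness.} Choose $x\in P'\setminus P$ close to the bending geodesic $\eta$. For a sufficiently small ball $B\subset N_R$ around $x$, $\overline P\cap B$ is the disjoint union of $D_0:=P'\cap B$ (a disk) with infinitely many pairwise disjoint disks $D_i\subset P\cap B$, one for each crown piece $\Pi_i$ (or $\Gamma$-translate thereof) passing within $\diam(B)$ of $x$; disjointness of the $D_i$ from $D_0$ uses $x\notin P$ and disjointness of distinct $D_i$ uses $B$ small enough to separate sheets of $P$. Hence $\overline P\cap B$ has infinitely many connected components accumulating at $x$, and this picture persists as $B$ shrinks, so no connected neighborhood basis of $x$ in $\overline P$ exists. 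Consequently $\overline P$ is not locally connected, and in particular not a suborbifold of $N_R$. The main obstacle throughout is the support-plane identification in the first paragraph; once that geometric fact is rigorously established, the remaining deductions follow from the fundamental-domain analysis of Theorem \ref{thm: exotic circle} together with standard convex-cocompact dynamics.
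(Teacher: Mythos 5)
Your overall strategy (track translates of $\tilde P$ under $\tilde\eta$, reduce to a fundamental-domain decomposition, count components in a small ball for non-local-connectedness) is the right one, but the central step is left unproven. You explicitly ``grant'' the support-plane/angular-position identification and name it yourself as ``the main obstacle''; that identification is precisely what carries the content of the statement $\overline P=P\cup P'$, so as written the argument has a genuine gap. The paper closes this gap without any appeal to bending or support planes: it decomposes the fundamental region $\Sigma$ of $\tilde P$ between $C_1$ and $C_3$ into pieces $\Sigma_0,\Sigma_1,\dots$ cut out by the orbits of $C_2$ and $C_4$, each relatively compact in $\mathbb{H}^3\cup\Omega$ (so that any sequence in $P$ leaving $P$ must meet infinitely many of them), and then verifies by an explicit computation in the reflection group that $\tilde\eta^{k}\Sigma_{2k}$ and $\tilde\eta^{-k}\tau_2\Sigma_{2k+1}$ are all sandwiched between $C_2$ and $C_4$. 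Since $\tilde\eta^{k}\tilde P\to\tilde P'$ and $\tilde\eta^{-k}\tau_2\tilde P\to\tilde P'$ (because $C$, resp.\ $\tau_2C$, passes through the repelling, resp.\ attracting, fixed point of $\tilde\eta$), every such escaping sequence accumulates only on the slab of $\tilde P'$ between $C_2$ and $C_4$. Note also that your intermediate claim that ``$\tilde P_\infty$ cannot be a $\Gamma$-translate of $\tilde P$ since $P^*$ is closed in $N_R^*$'' is not justified: closedness of $P^*$ in $M^*$ says nothing about $P$ accumulating on itself outside the convex core; the paper's relative-compactness argument is what rules this out.

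A second, smaller problem: you run the limiting argument on the crown pieces $\Pi_j$, but those only tile the portion of $\tilde P$ between the geodesics $pp_1$ and $pp_3$, i.e.\ the part relevant to $P^*$. The accumulation of $P$ onto $P'\setminus\eta$ takes place \emph{outside} $\core(N_R)$, so it is carried by the complementary parts $\Sigma_j\setminus\Pi_j$ of the larger pieces; the $\Pi_j$ alone can only detect the accumulation of $P^*$ onto $\eta$ already established in Theorem~\ref{thm: exotic circle}. Your description of $P'$ (stabilizer $\langle\tilde\eta\rangle$ up to finite index, properness from compactness of a fundamental domain in $\mathbb{H}^3\cup\Omega$, and $P'\cap\core(N_R)=\eta$ from $\tilde P'\cap\hull(\Lambda)=\tilde\eta$) and your final count of infinitely many components of $\overline P\cap B$ near a point of $P'$ do match the paper once the identification of the accumulation set is in place.
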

\begin{proof}
Again we will suppress the subscript $R$ for simplicity. First note that $P'$ is closed in $N$. This similarly follows from proper discontinuity: the plane $\tilde P'$ determined by $C'$ is stabilized by $\tilde\eta$, and a fundamental domain for its action is compact in $\mathbb{H}^3\cup\Omega$. Since $\tilde P'$ intersects the convex hull of the limit set $\Lambda$ at $\tilde \eta$,  we have $P'\cap\core(N)=\eta$ as desired.

To understand the topology of $P$, we again look at Figure~\ref{fig: prop proof}. Instead of the portion of $\tilde P$ lying between the geodesics $pp_1$ and $pp_3$, we need to consider the whole fundamental region $\Sigma$ sandwiched between $C_1$ and $C_3$. Similarly, it is divided by orbits of $C_2$ and $C_4$ into countably many pieces $\Sigma_0, \Sigma_1,\ldots$, and each piece is relatively compact in $\mathbb{H}^3\cup\Omega$. Thus a sequence of points in $P$ limit to a point not in $P$ only when it meets the projection of infinitely many of these pieces.

Let $\tau_2,\tau_4$ be reflections across $C_2, C_4$ respectively. It is clear from Figure~\ref{fig: billiard} that $\tau_2\Sigma_1$ is sandwiched between $C_2$ and $C_4$; similarly, $\tau_4\tau_2\Sigma_2=\tilde\eta\Sigma_2$ is sandwiched between $C_2$ and $C_4$. Inductively, $\tilde\eta^{-k}\tau_2\Sigma_{2k+1}$ and $\tilde\eta^k\Sigma_{2k}$ are sandwiched between $C_2$ and $C_4$. Since $\Sigma_i$ is contained in $\tilde P$, and both $\tilde\eta^k\tilde P$ and $\tilde\eta^{-k}\tau_2\tilde P$ converge to $\tilde P'$ as $k\to\infty$, these pieces limit on the portion of $\tilde P'$ sandwiched between $C_2$ and $C_4$.

This implies that $P$ only accumulates on $P'$. Moreover, any neighborhood of a point on $P'$ intersects $P$ in infinitely many pieces coming from each $\Sigma_i$, so $\overline{P}=P\cup P'$ is not locally connected.
\end{proof}
Note that Theorem~\ref{cor: qf_geometry} implies Theorem~\ref{thm: exotic circle}, since $P^*\subset P$ and $P'$ is disjoint from the interior of $\core(N)$. Nevertheless, we keep the proof of Theorem~\ref{thm: exotic circle} to give a thorough description of the geometry of $P^*$ as well.

\section{An acylindrical example}\label{sec: construction}
In this section, we briefly review the acylindrical orbifolds constructed in \cite{double_lunchbox}, which are covered by the quasifuchsian orbifolds described in the last section. We then construct an exotic plane in one of these acylindrical orbifolds by projecting down the corresponding quasifuchsian example.

\noindent\textbf{The example manifold and its deformations.}\quad One way to construct explicit examples of hyperbolic 3-manifolds is gluing faces of hyperbolic polyhedra (with desired properties) via hyperbolic isometries. See \cite[\S3.3]{thurston_book}, \cite{poly1}, \cite{poly2} and \cite{gaster} for some acylindrical examples.

Along the same idea, we construct a hyperbolic polyhedron given by the combinatorial data encoded in the Coxeter diagram shown in Figure~\ref{fig: coxeter}.  Let $\tilde Q$ be the hyperbolic polyhedron with an infinite end obtained by extending across Face 1 to infinity. In other words, it is the hyperbolic polyhedron with one infinite end bounded by the hyperbolic planes containing Faces~2--10; see Figure \ref{fig: polyhedron_hyperbolic}. Reflections in all faces of $\tilde Q$ generate a discrete subgroup of hyperbolic isometries; a subgroup of index $2$ gives an acylindrical hyperbolic 3-orbifold with Fuchsian end. We can deform $\tilde Q$ by pushing closer or pulling apart Faces $3$ and $5$, fixing the dihedral angles, and obtain deformations of the orbifold. This gives \cite[Thm.~1.2]{double_lunchbox}:
\begin{figure}[htp]
\centering
\begin{minipage}{0.45\linewidth}
\begin{subfigure}[t]{\linewidth}
\centering
\includegraphics{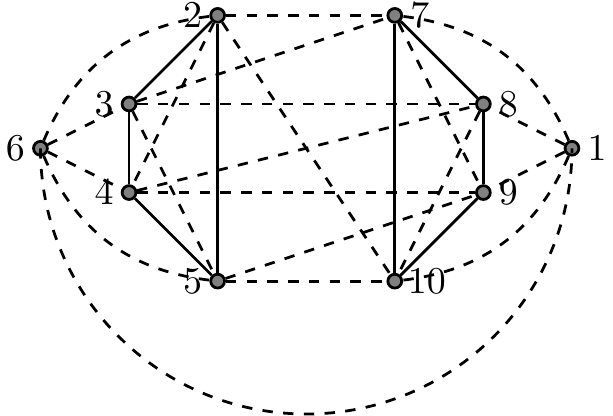}
\caption{The Coxeter diagram}
\label{fig: coxeter}
\end{subfigure}
\vspace{10pt}

\begin{subfigure}[t]{\linewidth}
\centering
\includegraphics{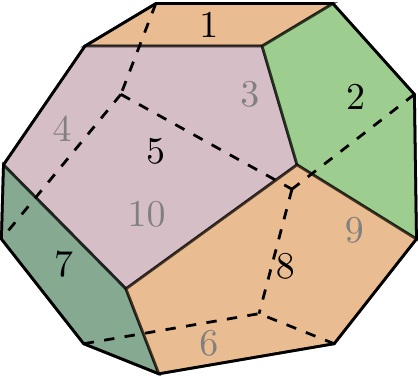}
\caption{The corresponding polyhedron}
\label{fig: polyhedron}
\end{subfigure}
\end{minipage}
\hspace{0.05\linewidth}
\begin{minipage}{0.4\linewidth}
\begin{subfigure}[t]{\linewidth}
\centering
\includegraphics[width=\textwidth]{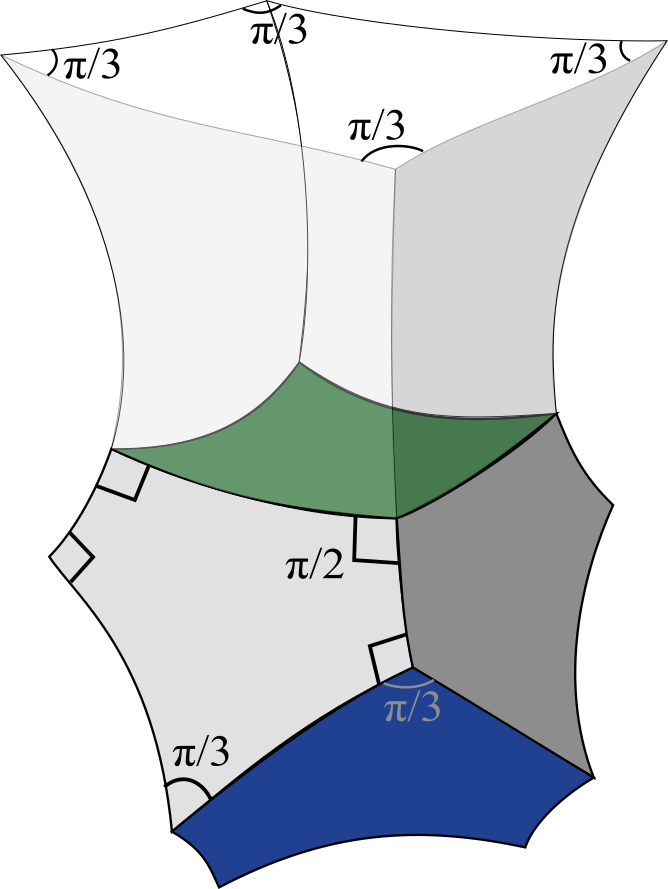}
\caption{The hyperbolic polyhedron in the unit ball model}
\label{fig: polyhedron_hyperbolic}
\end{subfigure}
\end{minipage}
\caption{Combinatorial data and visualization of the polyhedron}
\label{fig: polyhedron_data}
\end{figure}
\begin{thm}\label{thm: acy_group}
For each $t\in\left(1,\dfrac{5+\sqrt{39}}{3}\right)$, there exists a unique hyperbolic polyhedron $\tilde Q(t)$ so that the hyperbolic distance between Faces $3$ and $5$ is $\cosh^{-1}(t)$. The corresponding hyperbolic orbifold $\mathcal{O}(t)$ is acylindrical convex cocompact, whose convex core boundary is totally geodesic if and only if $t=2$.
\end{thm}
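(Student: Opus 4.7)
The plan is to realize the Coxeter data in Figure~\ref{fig: coxeter} as a hyperbolic polyhedron with one infinite end via Andreev's theorem, to show the space of such polyhedra is a one-parameter family naturally coordinatized by $\cosh^{-1}(t)$, and to verify acylindricality together with the $t=2$ rigidity condition.

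First I would invoke Andreev's theorem. All dihedral angle labels in the diagram are of the form $\pi/m$ with $m\ge 2$, so it suffices to check combinatorially from Figure~\ref{fig: coxeter} that the no-prismatic 3- and 4-circuit conditions are satisfied; Andreev then supplies a hyperbolic polyhedron with the prescribed combinatorics and dihedral angles, and extending across Face 1 to infinity yields the infinite-volume polyhedron $\tilde Q$ bounded by the planes of Faces 2--10. Discreteness of the group generated by reflections in these faces, and hence the orbifold structure on $\mathcal{O}(t)$, follow from Poincar\'e's polyhedron theorem since all dihedral angles are submultiples of $\pi$. Uniqueness of a hyperbolic polyhedron realizing prescribed dihedral angles is part of Andreev's statement.

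To introduce the parameter $t$, I would observe that the combinatorial and angle data leave exactly one continuous modulus, and that the hyperbolic distance $\cosh^{-1}(t)$ between Faces 3 and 5 is a natural coordinate on this deformation space. The interval $\bigl(1,(5+\sqrt{39})/3\bigr)$ should be the full range of admissible $t$: at the lower endpoint $t=1$ Faces 3 and 5 become tangent (distance $0$), while at the upper endpoint a second pair of disjoint faces becomes tangent on $\partial\mathbb{H}^3$ (or a vertex escapes to infinity). The explicit value $(5+\sqrt{39})/3$ is expected to emerge as the positive root of a quadratic in $t$ obtained by setting the relevant $2\times2$ minor of the Gram matrix of $\tilde Q(t)$ to zero.

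For acylindricality and convex cocompactness, I would identify the limit set of the associated Kleinian group with the Sierpi\'nski carpet obtained by removing from $S^2$ all $\Gamma$-translates of the round disk bounded by Face 1; this is a standard consequence of geometric finiteness together with the combinatorics of $\tilde Q$, and incompressibility of the boundary and the absence of essential non-boundary-parallel cylinders can be read off from the Coxeter diagram. Finally, the totally geodesic boundary condition at $t=2$ is equivalent to the plane carrying Face 1 being orthogonal to every adjacent face of $\tilde Q(t)$, which reduces to a single scalar equation in $t$ whose unique solution is $t=2$ by a direct Gram-matrix computation. The main obstacle I expect is pinning down the explicit upper bound $(5+\sqrt{39})/3$: this requires writing the Gram matrix of $\tilde Q(t)$ as a function of $t$, identifying the correct degenerating minor, and solving the resulting quadratic; the remaining steps are largely routine applications of Andreev's and Poincar\'e's theorems and standard properties of Kleinian groups with Sierpi\'nski carpet limit sets.
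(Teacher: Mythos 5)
This theorem is not proved in the paper at all: it is quoted verbatim from \cite{double_lunchbox} (Thm.~1.2 there), and the author explicitly defers the proof and the explicit calculations to that reference. Judging from Section~\ref{sec: computation}, where unit normals to the faces of $\tilde Q(t)$ are written down as explicit functions of $u=\sqrt{(t+1)/2}$, the source's approach is a direct Gram-matrix/normal-vector construction of the polyhedron for each $t$, from which existence, uniqueness, the admissible interval, and the bending data are all read off by computation. Your proposal takes a softer route, and it contains a genuine gap at its core. Andreev's theorem applies to compact (or finite-volume) polyhedra with all dihedral angles prescribed, and there it gives \emph{rigidity}. If you realize the full ten-faced Coxeter polyhedron of Figure~\ref{fig: coxeter} via Andreev and then extend across Face~1, you obtain a single polyhedron, not a one-parameter family; your subsequent assertion that ``the combinatorial and angle data leave exactly one continuous modulus'' directly contradicts the uniqueness you just invoked, and it is precisely the statement that needs proof. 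The modulus exists only because $\tilde Q(t)$ has an infinite-volume end (Face~1 is deleted and the angle constraints along its edges are dropped), a setting in which the standard Andreev theorem neither applies nor gives rigidity. To get existence and uniqueness of $\tilde Q(t)$ for each $t$ in the stated interval you need either the explicit construction (as in \cite{double_lunchbox}) or a genuine deformation-theoretic argument (e.g.\ parametrizing the convex cocompact reflection orbifolds by the symmetric locus in the Teich\-m\"uller space of the boundary orbifold); neither is supplied, and the endpoint value $(5+\sqrt{39})/3$ cannot be obtained without the explicit computation you postpone.

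Two smaller points. First, your description of the limit set as the complement of the $\Gamma$-orbit of the \emph{round} disk bounded by Face~1 is correct only at $t=2$: for $t\neq 2$ the convex core boundary is bent, Face~1 does not exist as a face of $\tilde Q(t)$, and the components of $\Omega$ are quasidisks rather than round disks. The limit set is still a Sierpi\'nski curve, but that should be deduced from acylindricality (a topological property, checkable at $t=2$ where the boundary is totally geodesic and then invariant under the deformation), not the other way around. Second, for convex cocompactness you should also note the absence of parabolics (no angle data forcing cusps), since geometric finiteness from Poincar\'e's theorem alone does not suffice. Your reduction of the totally geodesic condition to a single scalar equation is sound in spirit, but it should be phrased as the existence of a common orthogonal plane to the four faces surrounding the infinite end (equivalently, via the Proposition of Section~\ref{sec: quasifuchsian}, the Fuchsian condition $(s-1)(t-1)=4\cos^2(\pi/n)$ with $s=\phi(t)$), since ``the plane carrying Face~1'' is not defined for general $t$.
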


See \cite[Figure 3]{double_lunchbox} for some samples of the deformation. One way to explicitly construct $\mathcal{O}(t)$ is to take two copies of $\tilde Q(t)$ and identify corresponding faces. A fundamental domain for the corresponding Kleinian group is thus two copies of $\tilde Q(t)$, although for most discussions we often consider the full reflection group with a fundamental domain simply being $\tilde Q(t)$.

It is clear from construction that the quasifuchsian orbifold $N(t)$ corresponding to the boundary of $\mathcal{O}(t)$ is an example of those described in Section~\ref{sec: quasifuchsian}. Consistent with the notations there, let $\eta$ be the simple closed geodesic in $\mathcal{O}(t)$ coming from two copies of the geodesic segment orthogonal to both Faces 3 and 5, and $\xi$ be that from Faces 2 and 4. To distinguish the two ends of $N(t)$, we call the end shared with $\mathcal{O}(t)$ the \emph{top end}, and the other \emph{bottom end}. We have \cite[Thm.~1.3]{double_lunchbox}:

\begin{thm}\label{thm: skinning_map}
The convex core boundary of $\mathcal{O}(t)$ is bent along $\eta$ for $t\in(1,2)$, and $\xi$ for $t\in(2,(5+\sqrt{39})/{3})$, with hyperbolic lengths $l_\eta(t)=2\cosh^{-1}(t)$ and $l_\xi(t)=2\cosh^{-1}(s)$, where $s=\phi(t)$ is an explicit monotonic function. Bending angles $\lambda_\eta(t)$ and $\lambda_\xi(t)$ can also be explicitly calculated. On the bottom end of $N(t)$, the convex core boundary is bent along $\xi$ for $t\in(1,2)$ and $\eta$ for $t\in(2,(5+\sqrt{39})/{3})$, and lengths and angles can also be similarly calculated.
\end{thm}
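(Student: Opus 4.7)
The plan is to leverage the explicit polyhedron $\tilde Q(t)$ provided by Theorem~\ref{thm: acy_group}: since the full reflection group is generated by reflections in the face planes of $\tilde Q(t)$, and these generators admit explicit matrix representatives in $\psl(2,\mathbb{C})$ parametrized by the single variable $t$, every quantity in the statement reduces to a hyperbolic trigonometric computation inside a fundamental domain.

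First I would record the length of $\eta$. The closed geodesic $\eta$ is built from two copies of the common perpendicular between Faces~$3$ and~$5$, so by the defining normalization of $\tilde Q(t)$ its length is automatically $l_\eta(t)=2\cosh^{-1}(t)$. For $\xi$, I would express the dihedral skeleton of $\tilde Q(t)$ in terms of $t$ using the Coxeter data of Figure~\ref{fig: coxeter}, and then compute the hyperbolic cosine of the distance between Faces~$2$ and~$4$ as a specific algebraic function of $t$; calling this function $\phi(t)$ yields $l_\xi(t)=2\cosh^{-1}(\phi(t))$. Monotonicity of $\phi$ on the stated interval is then a one-variable calculus check once the explicit formula is in hand.

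Next, to identify which geodesic bends the convex core of $\mathcal{O}(t)$ on the top end, I would analyze the boundary of the convex hull of the limit set inside the quasifuchsian piece $N(t)$. By Theorem~\ref{thm: acy_group} this boundary is totally geodesic exactly at $t=2$, and the rigidity of the polyhedron combinatorics forces the bending locus to be supported on a finite union of closed geodesics; a continuity argument combined with convexity of the core then shows that the support must jump between $\eta$ and $\xi$ as $t$ crosses $2$, in the direction dictated by which pair of opposite faces is being pushed together. The bending angles $\lambda_\eta(t)$ and $\lambda_\xi(t)$ are then read off as an elementary function of the dihedral angles between Face~$1$ and its neighbors in $\tilde Q(t)$, giving a closed form in $t$. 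For the bottom end of $N(t)$, I would invoke the symmetry of the doubled polyhedron construction that swaps the two sides of the quasifuchsian piece: this involution interchanges the roles of $\eta$ and $\xi$, so the bending assignments on the bottom end are exactly the reverse of those on the top.

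The main obstacle is computational rather than conceptual: extracting a clean closed-form expression for $\phi$ and for the bending angles requires fixing coordinates on $\tilde Q(t)$ compatible with the Coxeter data and then tracking a long chain of hyperbolic trigonometric identities without losing the monotonicity needed for the final statement. A second subtle point is verifying that the bending locus is actually a single simple closed geodesic (rather than a combination of $\eta$ and $\xi$, or an irrational lamination); this relies on using the dihedral symmetries of $\tilde Q(t)$ to force the bending measure to be concentrated on the axis fixed by the appropriate involution.
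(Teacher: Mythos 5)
This theorem is not proved in the present paper: it is quoted verbatim as Theorem~1.3 of \cite{double_lunchbox}, and the text immediately afterwards says ``We refer to \cite{double\_lunchbox} for proofs of these statements and explicit calculations.'' So there is no in-paper proof to compare against; what can be said is whether your plan is a viable reconstruction of the argument in the cited reference. In broad outline it is: the length statements do follow exactly as you say ($l_\eta(t)=2\cosh^{-1}(t)$ is built into the normalization of $\tilde Q(t)$ in Theorem~\ref{thm: acy_group}, and $s=\phi(t)$ comes from computing the distance between the planes of Faces~2 and~4 via inner products of their unit normals in the hyperboloid model, the same machinery used in Section~\ref{sec: computation}), and the identification of the bending data reduces to the quasifuchsian piece $N(t)$, for which Proposition~2.1 already records that the convex core is bent along $\xi$ on one side and $\eta$ on the other.

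Two steps of your plan are shaky. First, the claim that ``continuity combined with convexity'' forces the bending support to jump between $\eta$ and $\xi$ at $t=2$ is not an argument: at $t=2$ both boundary components are totally geodesic, and continuity alone does not tell you which geodesic bends which side as you move off $t=2$. The actual content is to exhibit the support planes of $\partial\hull(\Lambda)$ on each side (in the quadrilateral picture these are the orbit of $C_1,C_3$ on one side and of $C_2,C_4$ on the other, meeting along lifts of $\xi$ and $\eta$ respectively); this is what Proposition~2.1 encodes and what requires the explicit inequality $(s-1)(t-1)\le 4\cos^2(\pi/n)$. Second, and more seriously, the involution you invoke for the bottom end --- a symmetry of the doubled construction interchanging $\eta$ and $\xi$ --- does not exist in general: $\eta$ and $\xi$ have different lengths ($2\cosh^{-1}(t)$ versus $2\cosh^{-1}(\phi(t))$ with $\phi(t)\neq t$ away from the Fuchsian point), so no isometry of $N(t)$ can swap them. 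The correct deduction for the bottom end is simply the dichotomy of Proposition~2.1: once the top end is known to be bent along $\eta$, the other side must be bent along $\xi$. Your closing worry about irrational laminations is legitimate, but it too is resolved by explicitly exhibiting the support planes rather than by a symmetry argument on the space of measured laminations.
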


\begin{figure}[htp]
\centering
\captionsetup{width=.6\linewidth}
\includegraphics{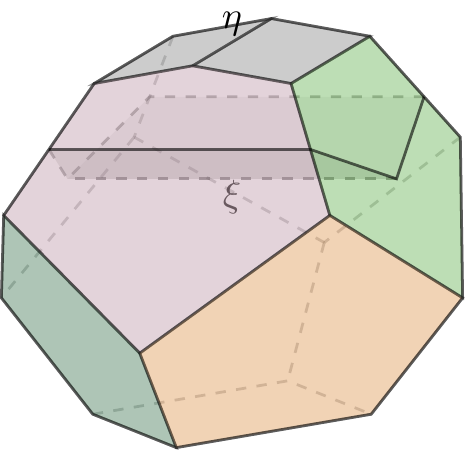}
\caption{A combinatorial picture of $\core(\mathcal{O}(t))$ for $t\in(1,2)$. Note that a copy of $\core(N(t))$ is embedded in the polyhedron}
\label{fig: poly bending}
\end{figure}

We refer to \cite{double_lunchbox} for proofs of these statements and explicit calculations.

\noindent\textbf{Existence of an exotic plane.}\quad We wish to make use of the quasifuchsian examples in Section~\ref{sec: quasifuchsian} to construct an acylindrical example. In particular, if we take the same plane $P(t)$ in $N(t)$ and project it down to $\mathcal{O}(t)$ via the covering map $N(t)\to \mathcal{O}(t)$ of infinite degree, we have to make sure that the cylinder contained in the bottom end of $N(t)$ projects to a closed surface in $\mathcal{O}(t)$. For this, we have
\begin{thm}\label{thm: exotic}
There exists $t=t_0\in(1,2)$ so that the image of $P(t_0)$ under the projection $N(t_0)\to \mathcal{O}(t_0)$ is an exotic plane. Moreover, the closure of the plane in $\mathcal{O}(t_0)$ is not locally connected, and hence not a suborbifold.
\end{thm}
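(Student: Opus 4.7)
The plan is to realize the acylindrical exotic plane as the image $Q(t_0):=\pi(P(t_0))$ of the quasifuchsian exotic plane from Theorem~\ref{thm: exotic circle} under the infinite-degree cover $\pi\colon N(t)\to\mathcal{O}(t)$, for a carefully chosen $t_0\in(1,2)$. First I would track where the pieces of $P(t)$ land under $\pi$. For $t\in(1,2)$, Theorem~\ref{thm: skinning_map} identifies the top end of $N(t)$ (bent along $\eta$) with the unique end of $\mathcal{O}(t)$, while the bottom end of $N(t)$ (bent along $\xi$) is absorbed into $\core(\mathcal{O}(t))$ since $\Lambda(\Gamma(t))$ fills in the corresponding component of $\Omega(\Gamma_R(t))$. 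Consequently, the half-cylinder piece of $P(t)$ bounded by $\xi$ projects into $\mathcal{O}(t)^*$, while the crown-with-tips piece $P(t)\cap\core(N(t))$ lands in $\core(\mathcal{O}(t))$, tangent to the convex core boundary along $\pi(\eta)$.

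Next, I would establish failure of closedness in $\mathcal{O}(t_0)$. By Theorem~\ref{cor: qf_geometry}, $\overline{P(t)}=P(t)\cup P'(t)$ in $N(t)$, with $P'(t)$ a closed cylinder about the bending geodesic $\eta$. Continuity of $\pi$ yields $\overline{Q(t)}\supseteq Q(t)\cup\pi(P'(t))$, and $\pi(P'(t))$ sits entirely inside the end of $\mathcal{O}(t)$, meeting $\core(\mathcal{O}(t))$ only along $\pi(\eta)$. To conclude $Q(t_0)$ is not closed I must verify $\pi(P'(t_0))\not\subset Q(t_0)$, equivalently that no element of $\Gamma(t_0)\setminus\Gamma_R(t_0)$ sends the exotic circle $C$ to a member of $\Gamma_R(t_0)\cdot C'$. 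This is a generic condition in $t$, which I would confirm directly for $t_0$ using the explicit matrices for $\Gamma(t_0)$ produced in Section~\ref{sec: computation}.

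To prove $Q(t_0)^*$ is closed in $\mathcal{O}(t_0)^*$, I would invoke Theorem~\ref{thm: rigidity}, reducing the problem to ruling out density of $Q(t_0)$ in $\mathcal{O}(t_0)^*$. For this I would show that the closure $\overline{Q(t_0)}$ in $\mathcal{O}(t_0)$ is exactly $Q(t_0)\cup\pi(P'(t_0))$ by analyzing sequences $\gamma_n\tilde P(t_0)$ with $\gamma_n\in\Gamma(t_0)$: when the $\gamma_n$ eventually lie in a single $\Gamma_R(t_0)$-coset, Theorem~\ref{cor: qf_geometry} controls the limit circle; when the $\gamma_n$ visit infinitely many cosets, the polyhedral structure of $\Gamma(t_0)/\Gamma_R(t_0)$ forces the limit circles either into $\Gamma(t_0)\cdot C'$ or to degenerate to a point of $\Lambda(\Gamma(t_0))$, neither of which contributes accumulation inside $\mathcal{O}(t_0)^*$. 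Finally, failure of local connectedness of $\overline{Q(t_0)}=Q(t_0)\cup\pi(P'(t_0))$ at points of $\pi(P'(t_0))$ follows by transferring the sheet-accumulation picture of Theorem~\ref{cor: qf_geometry} (countably many pieces $\Sigma_i$ piling onto $\tilde P'$) through the local isometry $\pi$.

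The main obstacle is ruling out density in $\mathcal{O}(t_0)^*$: the rigidity dichotomy does not itself decide which alternative holds, and the quasifuchsian argument of Theorem~\ref{cor: qf_geometry} only controls accumulation within a single $\Gamma_R(t_0)$-coset of the orbit. The cleanest route is to work with the explicit $t_0$ of Section~\ref{sec: computation}, where concrete generators for $\Gamma(t_0)$ together with the explicit description of $C$ allow direct verification that no cross-coset accumulation enters the convex hull of $\Lambda(\Gamma(t_0))$.
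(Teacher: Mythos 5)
Your overall strategy --- projecting the quasifuchsian exotic plane of Theorem~\ref{thm: exotic circle} down through the infinite-degree cover $N(t)\to\mathcal{O}(t)$ --- is the same as the paper's, and your geography of where the two halves of $P(t)$ land is essentially correct. But there is a genuine gap at the heart of the argument: you never establish that the image of the half-plane bounded by $\xi$ (the piece that becomes a half-cylinder in the bottom end of $N(t)$) is \emph{closed} once it is pushed into $\core(\mathcal{O}(t))$. This is exactly the dangerous piece: it lands in $\mathcal{O}(t)^*$ under a covering of infinite degree, and for generic $t$ its image is not closed, so by Theorem~\ref{thm: rigidity} the whole plane is dense in $\mathcal{O}(t)^*$ and hence not exotic. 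Your third paragraph tries to rule out density by asserting that when $\gamma_n$ ranges over infinitely many $\Gamma_R(t_0)$-cosets "the polyhedral structure forces the limit circles into $\Gamma(t_0)\cdot C'$ or to degenerate to a point," but no mechanism is given, and no such statement can hold for arbitrary $t$; it is precisely the special property of $t_0$ that you have not identified. The paper's key idea, which your proposal is missing, is a continuity (intermediate value) argument in $t$: the lift $\tilde P(t)$ is automatically orthogonal to Faces 2 and 4 of the polyhedron $\tilde Q(t)$, it tends to Face 1 as $t\to2$ and to a plane of symmetry through Face 6 as $t\to1$, so for some $t_0\in(1,2)$ it meets the edge shared by Faces 7 and 8 orthogonally. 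At this $t_0$ the half-plane is orthogonal to every face of $\tilde Q(t_0)$ it meets and disjoint from Face 5, hence descends to a \emph{compact} orbifold surface with geodesic boundary $\xi$ (two cone points of order $2$ and one of order $3$); closedness of $Q(t_0)^*$ then follows, with the other half controlled by the quasifuchsian analysis since $\core(N(t_0))$ embeds in $\core(\mathcal{O}(t_0))$. Appealing to "the explicit $t_0$ of Section~\ref{sec: computation}" does not fill this gap, since that computation is itself derived from the orthogonality condition you have not stated.

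Two smaller points. First, your proposed verification that $\pi(P'(t_0))\not\subset Q(t_0)$ via explicit matrices is unnecessary: a circle in $\Gamma(t_0)\cdot C$ bounds a plane meeting the interior of the convex core in an open set, whereas $C'$ bounds a plane meeting $\core(\mathcal{O}(t_0))$ only in the geodesic $\eta$, so $C'\notin\Gamma(t_0)\cdot C$ for structural reasons. Second, once the correct $t_0$ is in hand, non-closedness and the failure of local connectedness do transfer from Theorem~\ref{cor: qf_geometry} through the local isometry essentially as you describe, so that part of your plan is sound.
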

\begin{proof}
For this, we look at how the plane intersects a fixed fundamental domain, the polyhedron $\tilde Q(t)$. A lift $\tilde P(t)$ of the plane $P(t)$ passes through the geodesic segment orthogonal to Faces 2 and 4, so it is orthogonal to both faces as well. When $t\to2$, $\tilde P(t)$ tends to Face 1; when $t\to 1$, $\tilde P(t)$ tends to a plane orthogonal to Face 6, dividing the polyhedron $\tilde P(1)$ into two equal parts. By continuity, for some subinterval of $(1,2)$, $\tilde P(t)$ intersects the edge shared by Faces 7 and 8. Another continuity argument guarantees the existence of a $t=t_0\in(1,2)$ so that $\tilde P(t_0)$ intersects this edge orthogonally. This implies that $\tilde P(t_0)$ intersects both Faces 7 and 8 orthogonally, and disjoint from Face 5. See Figure \ref{fig: schematic} for a picture of $\tilde P(t_0)\cap\tilde Q(t_0)$.
\begin{figure}[htp]
\centering
%\captionsetup{width=.6\linewidth}
\includegraphics{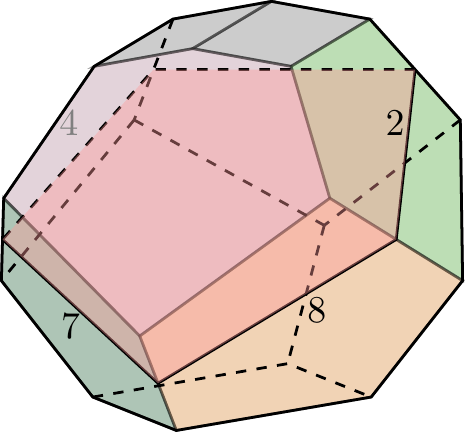}
\caption{A piece of the exotic plane $P$ inside the fundamental domain}
\label{fig: schematic}
\end{figure}

Clearly, as $\tilde P(t_0)$ is orthogonal to Faces 2, 4, 7 and 8, the half plane that projects to a cylinder in the bottom end in $N(t_0)$ further projects down to a orbifold surface in $\mathcal{O}(t_0)$, with one geodesic boundary component, two cone points of order 2 and one cone point of order 3. The other half behaves exactly as that in the quasifuchsian example (recall that a copy of $\core(N(t_0))$ is embedded in $\core(\mathcal{O}(t_0))$, see Figure~\ref{fig: poly bending}), so it is indeed an exotic plane in $\mathcal{O}(t_0)$. Following Theorem~\ref{cor: qf_geometry}, the closure of this plane is not locally connected.
\end{proof}
Theorem~\ref{thm: main} is then a direct consequence of Theorem~\ref{thm: exotic}, after taking a finite manifold cover of $\mathcal{O}(t_0)$.

\section{Computations}\label{sec: computation}
In this section, we calculate the explicit value of $t_0$ predicted in the previous section. We refer to the calculations in \cite[\S6]{double_lunchbox} freely.

It is convenient to work with the hyperboloid model of $\mathbb{H}^3$. Let $E^{3,1}$ be $\mathbb{R}^4$ equipped with the indefinite inner product $\langle x,y\rangle=-x_0y_0+x_1y_1+x_2y_2+x_3y_3$, and set
$$V_+:=\{x\in E^{3,1}:\langle x,x\rangle=-1, x_0>0\}.$$
Together with the metric induced from the inner product, $V_+$ is isometric to $\mathbb{H}^3$. We can uniquely determine a geodesic plane in $\mathbb{H}^3$ using its unit normal in the hyperboloid model: given $e\in E^{3,1}$ with $\langle e,e\rangle=1$, $V_+\cap (\mathbb{R}e)^\perp$ is a geodesic plane. For correspondence between this model and the upper half space/unit ball model, especially for geodesic planes, we refer to \cite{double_lunchbox, mythesis}.

Set $u=\sqrt{(t+1)/2}$. One choice of normals for Faces 2 and 4 of the polyhedron $\tilde Q(t)$ may be
$$\left(\frac{u-\sqrt{4u^2+4v^2-3-4u^2v^2}}{2u^2-2},\pm v,0,\frac{-1+u\sqrt{4u^2+4v^2-3-4u^2v^2}}{2u^2-2}\right)$$
where $v=\frac{3u+\sqrt{(u^2+2)(16u^2-3)}}{8u^2-2}$. Correspondingly, the normals for Faces 7 and 8 are
$$\left(\frac{u^2\sqrt{3}-\sqrt{u^2+2}}{2u^2-2},\mp\frac{\sqrt{3}}{2},\frac{\sqrt{3}}2,\frac{u\sqrt{u^2+2}-u\sqrt{3}}{2u^2-2}\right).$$
Suppose the unit normal to the plane $\tilde P$ is $(x_0,x_1,x_2,x_3)$. Then this vector is orthogonal to the normals listed above. Therefore $x_1=0$ and
$$x_2=\frac{\sqrt{u^2+2}-u\sqrt{3}\sqrt{4u^2+4v^2-3-4u^2v^2}}{\sqrt{3}(-1+u\sqrt{4u^2+4v^2-3-4u^2v^2})}x_0,\quad x_3=\frac{u-\sqrt{4u^2+4v^2-3-4u^2v^2}}{-1+u\sqrt{4u^2+4v^2-3-4u^2v^2}}x_0.$$
On the other hand, the sequence of planes $\tilde{\eta}^n\cdot\tilde P$ converges to $\tilde P'$, where $\tilde P'$ has unit normal
$$\left(\frac1{\sqrt{u^2-1}},0,0,-\frac{u}{\sqrt{u^2-1}}\right).$$
This can be calculated using the formula in \cite[\S6.2]{double_lunchbox} for the hyperbolic element $\tilde\eta$, and the fact that the circle $C'$ corresponding to $\tilde P'$ passes through the fixed points of $\tilde\eta$ and is symmetric across the imaginary axis. Since $\tilde P$ is tangent to $\tilde P'$ at infinity, the inner product of their unit norms is $1$ (or $-1$, but we can always change the orientation of $\tilde P$), so
$$-x_0\frac{1}{\sqrt{u^2-1}}-x_3\frac{u}{\sqrt{u^2-1}}=1.$$
Hence we have
\begin{equation*}
\begin{split}
x_0&=\frac{1-u\sqrt{4u^2+4v^2-3-4u^2v^2}}{\sqrt{u^2-1}},\\
x_2&=\frac{-\sqrt{u^2+2}+u\sqrt{3}\sqrt{4u^2+4v^2-3-4u^2v^2}}{\sqrt{3}\sqrt{u^2-1}},\\
x_3&=\frac{\sqrt{4u^2+4v^2-3-4u^2v^2}-u}{\sqrt{u^2-1}}.
\end{split}
\end{equation*}
Since $-x_0^2+x_2^2+x_3^2=1$, we have
$$4u^2+4v^2-3-4u^2v^2-\frac{2\sqrt{3}}{3}u\sqrt{u^2+2}\sqrt{4u^2+4v^2-3-4u^2v^2}+\frac{u^2+2}{3}=0,$$
and thus
\begin{equation*}
\begin{split}
9(4u^2+4v^2-3-4u^2v^2)^2+(u^2+2)^2+6(4u^2+4v^2&-3-4u^2v^2)(u^2+2)\\
&-12u^2(u^2+2)(4u^2+4v^2-3-4u^2v^2)=0.
\end{split}
\end{equation*}
Plugging in the expression of $v$ in terms of $u$, the left hand side gives
 $$\frac{u^2-1}{(4u^2-1)^4}\left(f(u)+g(u)\sqrt{16u^4+29u^2-6}\right)$$
 where
 \begin{equation*}
 \begin{split}
 f(u)&=-625 + 11153 u^2 - 53284 u^4 + 65632 u^6 - 38720 u^8 + 22144 u^{10} - 
 9216 u^{12},\\
 g(u)&=900 u - 7092 u^3 + 9072 u^5 - 4032 u^7 + 1152 u^9.
 \end{split}
 \end{equation*}
 Thus we have
 \begin{equation*}
 \begin{split}
 0&=f(u)^2-g(u)^2(16u^4-29u^2-6)\\
 &=(4u^2-1)^4(-625 + 944 u^2 - 976 u^4 + 576 u^6)\\
 &\qquad\qquad\qquad\qquad(-625 + 3586 u^2 - 6585 u^4 + 3112 u^6 - 1360 u^8 + 576 u^{10})\\
 &=\frac14 (1 + 2 t)^4 (-325 + 200 t - 28 t^2 + 72 t^3) (-625 - 2330 t - 3237 t^2 + 916 t^3 + 20 t^4 + 72 t^5)
 \end{split}
 \end{equation*}
 Set $h(t):=-625 - 2330 t - 3237 t^2 + 916 t^3 + 20 t^4 + 72 t^5$. Then $h''(t)=-6474 + 5496 t + 240 t^2 + 1440 t^3>0$ when $t\in(1,2)$. So $h(t)$ attains maximum at $t=1$ or $t=2$. But $h(1)=-5184<0$ and $h(2)=-8281<0$, so $h(t)<0$ for $t\in(1,2)$. To find $t\in(1,2)$ satisfying the equation above, we must thus solve
 $$-325 + 200 t - 28 t^2 + 72 t^3=0.$$
 This polynomial has a unique real root
 $$t=t_0=\frac1{54}\left(7 - \left(\frac{25515 \sqrt{773}-654761}{2}\right)^{1/3} + \left(\frac{25515 \sqrt{773}+654761}{2}\right)^{1/3}\right)\approx1.202,$$
in $(1,2)$ as desired.

\appendix
\section{Appendix: Countably many exotic planes}
In this appendix, we give a proof of Theorem~\ref{thm: countable}; that is, there are at most countably many exotic planes in a convex cocompact hyperbolic 3-manifold $M=\Gamma\backslash\mathbb{H}^3$ with incompressible boundary.

We start by recalling the following theorem in \cite{MMO2}:
\begin{thm}[\cite{MMO2}]
If $M$ is convex cocompact with incompressible boundary, then the fundamental group of any plane $P$ with $P^*$ nonempty and closed in $M^*$ is nontrivial.
\end{thm}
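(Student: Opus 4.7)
My plan is to argue by contradiction: assume $\pi_1(P)$ is trivial and derive a contradiction from $P^*$ being nonempty and closed in $M^*$. Unwinding the assumption, the $\Gamma$-stabilizer $\Gamma_{\tilde P}$ of a lift $\tilde P \subset \mathbb{H}^3$ is trivial; equivalently, the $\Gamma$-stabilizer of the boundary circle $C = \partial \tilde P \subset S^2$ is trivial. Consequently $P$ embeds in $M$, and $P^*$ is identified via this embedding with the convex open subset $\tilde P^{\circ} := \tilde P \cap \Int(\hull(\Lambda))$ of the hyperbolic plane $\tilde P \cong \mathbb{H}^2$. Compactness of $\core(M)$ together with closedness of $P^*$ in $M^*$ implies that the closure $\overline{P^*}$ in $M$ is compact with frontier on $\partial \core(M)$.

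I would split the analysis into two cases depending on whether $\tilde P^{\circ}$ is bounded in $\mathbb{H}^2$. In the unbounded case, there exist $\tilde x_n \in \tilde P^{\circ}$ with $d_{\mathbb{H}^2}(\tilde x_n, x_0) \to \infty$ whose images in $M$ nonetheless converge into $\core(M)$. Applying $\gamma_n \in \Gamma$ to bring the $\tilde x_n$ back near a fixed compact yields $\gamma_n \to \infty$ in $\Gamma$, $\gamma_n \tilde x_n \to \tilde y \in \hull(\Lambda)$, and planes $\gamma_n \tilde P$ Chabauty-converging to some plane $\tilde P'$ through $\tilde y$. Choosing $\tilde x_n$ deep inside $\tilde P^{\circ}$ arranges $\tilde y \in \Int(\hull(\Lambda))$, so the closedness of $P^*$ in $M^*$ implies every point of $\tilde P' \cap \Int(\hull(\Lambda))$ lies in $\Gamma \cdot \tilde P^{\circ}$; a Baire-category argument on the countably many $\Gamma$-translates of $\tilde P$ forces $\tilde P' = \gamma_\infty \tilde P$ for some $\gamma_\infty \in \Gamma$. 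Setting $\delta_n := \gamma_\infty^{-1}\gamma_n \to \infty$, we obtain $\delta_n \tilde P \to \tilde P$ in Chabauty while $\Gamma_{\tilde P} = 1$. Analyzing the limiting dynamics of $\delta_n$ on $S^2$, the attracting and repelling fixed points of a subsequence of $\delta_n$ must both lie on $C$ (otherwise $\delta_n C$ would collapse to a point rather than converge to $C$); this forces $\delta_n$ to preserve $C$, contradicting $\Gamma_{\tilde P} = 1$.

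In the bounded case, $\tilde P^{\circ}$ is a bounded convex open disk in $\mathbb{H}^2$, and its closure $\Pi$ projects via the embedding to a compact embedded disk $\overline{P^*} \subset \core(M)$ whose boundary is a simple closed curve $\gamma \subset \partial \core(M)$. Incompressibility of $\partial \overline M$ combined with the fact that $\gamma$ bounds the disk $\overline{P^*}$ in $M$ forces $\gamma$ to be null-homotopic on $\partial \overline M$; hence $\gamma$ bounds a disk $D \subset \partial \overline M$, and the embedded sphere $\overline{P^*} \cup D$ bounds a $3$-ball $B \subset M$ by asphericity. Lifting to $\mathbb{H}^3$, the convexity of $\hull(\Lambda)$ places this ball inside $\hull(\Lambda)$ and makes it meet $\tilde P$ exactly in $\Pi$. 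This configuration contradicts the geometry of the convex hull: $\tilde P$ extends across $\partial \Pi$ into the exterior of $\hull(\Lambda)$, so a geodesic disk cannot be capped off by a pleated disk on $\partial \hull(\Lambda)$ from inside.

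The main obstacle will be making the bounded case rigorous: the pleated (rather than smooth) structure of $\partial \hull(\Lambda)$, and the possibility that $\gamma$ crosses many bending leaves, means the final ``capping off'' contradiction needs a careful geometric argument. One clean route is to pass to the doubled manifold $DM := \overline M \cup_{\partial \overline M} \overline M$, a closed hyperbolic $3$-orbifold in which the doubled disk becomes a closed surface subject to the rigidity of essential surfaces in closed hyperbolic $3$-manifolds; incompressibility of $\partial \overline M$ guarantees the doubling is geometrically well-behaved, and standard topological arguments in $DM$ complete the contradiction.
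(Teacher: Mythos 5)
First, a point of reference: the paper does not prove this statement; it is quoted from \cite{MMO2} and used as a black box in the appendix. So there is no in-paper proof to compare against, and your argument has to stand on its own. As written, it does not: the decisive case is exactly the one your case division lets slip through. In your ``unbounded'' case you assert that one can choose $\tilde x_n\in\tilde P\cap\Int(\hull(\Lambda))$ escaping to infinity whose renormalized limits land in $\Int(\hull(\Lambda))$. This requires points of $\tilde P^{\circ}$ going to infinity while staying a definite distance from $\partial\hull(\Lambda)$, and nothing guarantees this: the end of $S^*$ may be entirely contained in an arbitrarily thin collar of $\partial\core(M)$ (this is precisely the geometric picture of the exotic planes in this paper, whose crown tips spiral into the bending locus of the convex core boundary). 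In that regime every renormalized limit $\tilde y$ lies on $\partial\hull(\Lambda)$, closedness of $P^*$ in $M^*$ gives no information, and your Baire/dynamics argument never starts. This is the case where the hypothesis of incompressible boundary must actually be used --- roughly, one takes a compact core $K\subset M^*$ with collar $M^*\setminus K\cong\partial K\times(0,1)$, notes that properness pushes the end of the disk $S^*$ into the collar, and uses incompressibility to cap off $f(\partial S_0)$ and derive a topological contradiction --- but your unbounded branch never invokes incompressibility at all, which is a structural warning sign.

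There are further problems even in the subcase your argument does address. The concluding dynamical step is false: if $\delta_n\to\infty$ with $\delta_n C\to C$, one can only conclude (after passing to a subsequence) that the attracting/repelling limit points lie on $C$; this does not force $\delta_n$ to preserve $C$ (take $\delta_n(z)=\lambda_n e^{i\theta_n}z$ with $\theta_n\to 0$ and $C=\mathbb{R}\cup\{\infty\}$: the fixed points $0,\infty$ lie on $C$ and $\delta_n C\to C$, yet no $\delta_n$ stabilizes $C$). Moreover, $\delta_n\tilde P\to\tilde P$ with $\delta_n\notin\Stab_\Gamma(\tilde P)$ does not by itself contradict closedness of $P^*$ in $M^*$ --- a closed set may accumulate on itself --- so even repairing the dynamics would not finish this branch without a properness argument. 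Two smaller issues: triviality of $\Stab_\Gamma(\tilde P)$ does not imply $P$ embeds in $M$ (distinct $\Gamma$-translates of $\tilde P$ can still intersect), so the identifications you make at the outset need justification; and your bounded case rests on a ``capping off'' contradiction that you yourself flag as unproven. In short, the proposal isolates some genuine phenomena (renormalization, Baire category over the countably many translates) but omits the compact-core/incompressibility argument that carries the actual content of the theorem.
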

In particular, either $P^*$ is a cylinder, or $P^*$ is a nonelementary surface. For the latter case, the fundamental group of $P^*$ contains a free group on two generators. Following the same arguments of the proof of \cite[Cor.~2.3]{MMO2}, we conclude that there are only countably many nonelementary $P^*$.

For the former case, $P^*$ contains a closed geodesic $\gamma$ of $M$. Let $\tilde\gamma$ be any lift, with end points $p,q$ on the sphere at infinity. Let $C$ be the boundary circle of $P$ passing through $p,q$. We have
\begin{lm}
Each component of $C-\{p,q\}$ is contained in the closure of a connected component of the domain of discontinuity $\Omega$.
\end{lm}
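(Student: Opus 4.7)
The plan is to associate to each component $A$ of $C \setminus \{p, q\}$ a single connected component of $\mathbb{H}^3 \setminus \hull(\Lambda)$ adjacent to $A$ at infinity, and to conclude via the bijection between such components and components of $\Omega$ given by the nearest-point retraction.

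First I would use the closed geodesic $\gamma \subset P^* \subset M^*$: its lift $\tilde\gamma \subset \tilde P$ lies in $\Int(\hull(\Lambda))$ and divides $\tilde P$ into two open half-planes $H_+, H_-$, whose arcs at infinity are precisely the two components of $C \setminus \{p, q\}$. Fix one component $A$ and let $H$ be the corresponding half-plane. The set $S := \tilde P \cap \Int(\hull(\Lambda))$ is an open, convex, $\tilde\gamma$-invariant subset of $\tilde P$ containing $\tilde\gamma$; since $P^* = \langle\tilde\gamma\rangle \backslash S$ is a cylinder, $S$ is simply connected.

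Next I would analyze the complement $T := H \setminus \overline{S}$ in $\tilde P$. Convexity of $\overline{S}$ in $\tilde P$ together with the fact that $\tilde\gamma \subset \overline{S}$ force $T$ to be either empty or a single connected region of $H$ adjacent to $A$ at infinity. In the nonempty case, $T$ is a connected subset of $\mathbb{H}^3 \setminus \hull(\Lambda)$, so $T \subset E$ for a unique connected component $E$ of this set; letting $D$ be the component of $\Omega$ corresponding to $E$ under nearest-point retraction, one has $\overline{E} \cap S^2 = \overline{D}$. Any $y \in A \cap \Omega$ is approached by a geodesic ray in $H$ which eventually exits $\hull(\Lambda)$ and so lies in $\overline{T} \cap S^2 \subset \overline{D}$; and any $y \in A \cap \Lambda$ can be obtained as a limit of points in $A \cap \Omega$ by iterating $\tilde\gamma$ on a reference point and using closedness of $\overline{D}$, giving $A \subset \overline{D}$.

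The main obstacle is the degenerate case $T = \emptyset$, where $H \subset \overline{S}$ and $A \subset \Lambda$. Here I would invoke incompressibility of $\partial\overline{M}$: by a loop-theorem argument, $\langle\tilde\gamma\rangle$ is conjugate into a peripheral subgroup, so $\tilde\gamma$ stabilizes some component $D$ of $\Omega$, and $\partial D$ is a Jordan curve through $p, q$. Combining this with the $\tilde\gamma$-invariance of the closed set $A \subset \Lambda$ and the north/south dynamics of $\tilde\gamma$ on $C$ with fixed points $p, q$, one should be able to force $A$ to lie in a single sub-arc of $\partial D$ joining $p$ to $q$, hence $A \subset \partial D \subset \overline{D}$. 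Handling this degenerate case cleanly—in particular, pinning down the correct component $D$ stabilized by $\tilde\gamma$ so that the $\tilde\gamma$-invariant arc $A$ actually lies on $\partial D$—is what I expect to be the most delicate step of the argument.
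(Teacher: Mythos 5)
There is a genuine gap, and it sits exactly where the real content of the lemma lies: your claim that convexity of $\overline{S}$ together with $\tilde\gamma\subset\overline{S}$ forces $T=H\setminus\overline{S}$ to be empty or connected. A closed convex subset of $\mathbb{H}^2$ containing the geodesic $\tilde\gamma$ can perfectly well have ideal points in the \emph{interior} of the arc $A$ (for instance $\hull(\{p,q,a_0\})$ with $a_0\in A$), in which case $H\setminus\overline{S}$ is disconnected. Here this is not a hypothetical pathology: since $S$ is $\Stab_\Gamma(C)$-invariant, a single ideal point $a_0\in A$ of $\overline{S}$ forces the whole orbit $\tilde\gamma^na_0$ to consist of ideal points, so $T$ would break into infinitely many pieces, one between each pair of consecutive geodesics $\tilde\gamma^na_0\,\tilde\gamma^{n+1}a_0$ (this is precisely the crown picture appearing elsewhere in the paper). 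Nothing in your argument shows that these pieces lie in the \emph{same} component $E$ of $\mathbb{H}^3\setminus\hull(\Lambda)$ — a priori they lie in the distinct translates $\tilde\gamma^n\tilde E$, whose ideal boundaries are distinct components of $\Omega$, which would make the arc $A$ meet the closures of infinitely many components and the lemma would fail. Ruling this out is the heart of the matter, and a purely convex-geometric argument in $\tilde P$ does not see it. (A secondary issue in the same step: $T=H\setminus\overline{S}$ need not lie in $\mathbb{H}^3\setminus\hull(\Lambda)$, since it may contain points of $\partial\hull(\Lambda)\cap\tilde P$; and your claim that every point of $A\cap\Lambda$ is a limit of points of $A\cap\Omega$ fails if $A\cap\Lambda$ has nonempty interior in $A$.)

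The paper closes this gap with a topological argument downstairs rather than a convexity argument upstairs. Using properness of $f\colon S^*\to M^*$ and a compact core $K\subset M^*$, the set $f^{-1}(K)$ is compact, so each end of the cylinder $S$ (after a compact homotopy that uses incompressibility of $\partial\overline{M}$ to push disk components of $f^{-1}(K)$ into $\partial K$) maps entirely into a \emph{single} component $E_i$ of $M\setminus K$; since $E_i$ differs from an end of the convex cocompact manifold $M$ by a compact set, the lift of $f(S_i)$ containing the half-plane bounded by $\tilde\gamma_i$ sits inside a single lift of $E_i$, which is bounded at infinity by a single component $\Omega_i$ of $\Omega$, whence $A\subset\overline{\Omega_i}$. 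This "one end of the cylinder exits one end of $M$" step is the mechanism your proposal is missing; it simultaneously disposes of your degenerate case $T=\emptyset$, which you acknowledge you have not handled. To repair your approach you would need to import essentially this input, at which point the convexity bookkeeping becomes unnecessary.
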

\begin{proof}
We follow many of the arguments in \cite[\S2]{MMO2}. The plane $P$ is the image of $\hull(C)\cong\mathbb{H}^2$ under an isometric immersion, which descends to a map $f:S\to M$ where $S=\Stab_{\Gamma}(C)\backslash\hull(C)$ is a hyperbolic cylinder. Let $S^*=f^{-1}(M^*)$. Then $S^*$ is a convex subsurface of $S$, also a topological cylinder, and the restriction $f:S^*\to P^*\subset M^*$ is proper.

Since $\core(M)$ is homeomorphic to $\overline{M}$, $M^*$ deformation retracts to a compact submanifold $K\subset M^*$. One can also arrange that $K$ is transverse to $f$, and so $S_0:=f^{-1}(K)\subset S^*$ is a compact, smoothly bounded region in $S^*$, although not necessarily connected.

As in the proof of Theorem 2.1 in \cite{MMO2}, after changing $f$ by a compact deformation, we may assume that the inclusion of $S_0$ into $S^*$ is injective on $\pi_1$. So the components of $S_0$ are either cylinders, or disks; see Figure~\ref{fig: cylinder}.
\begin{figure}[htp]
\centering
\includegraphics[width=0.7\textwidth]{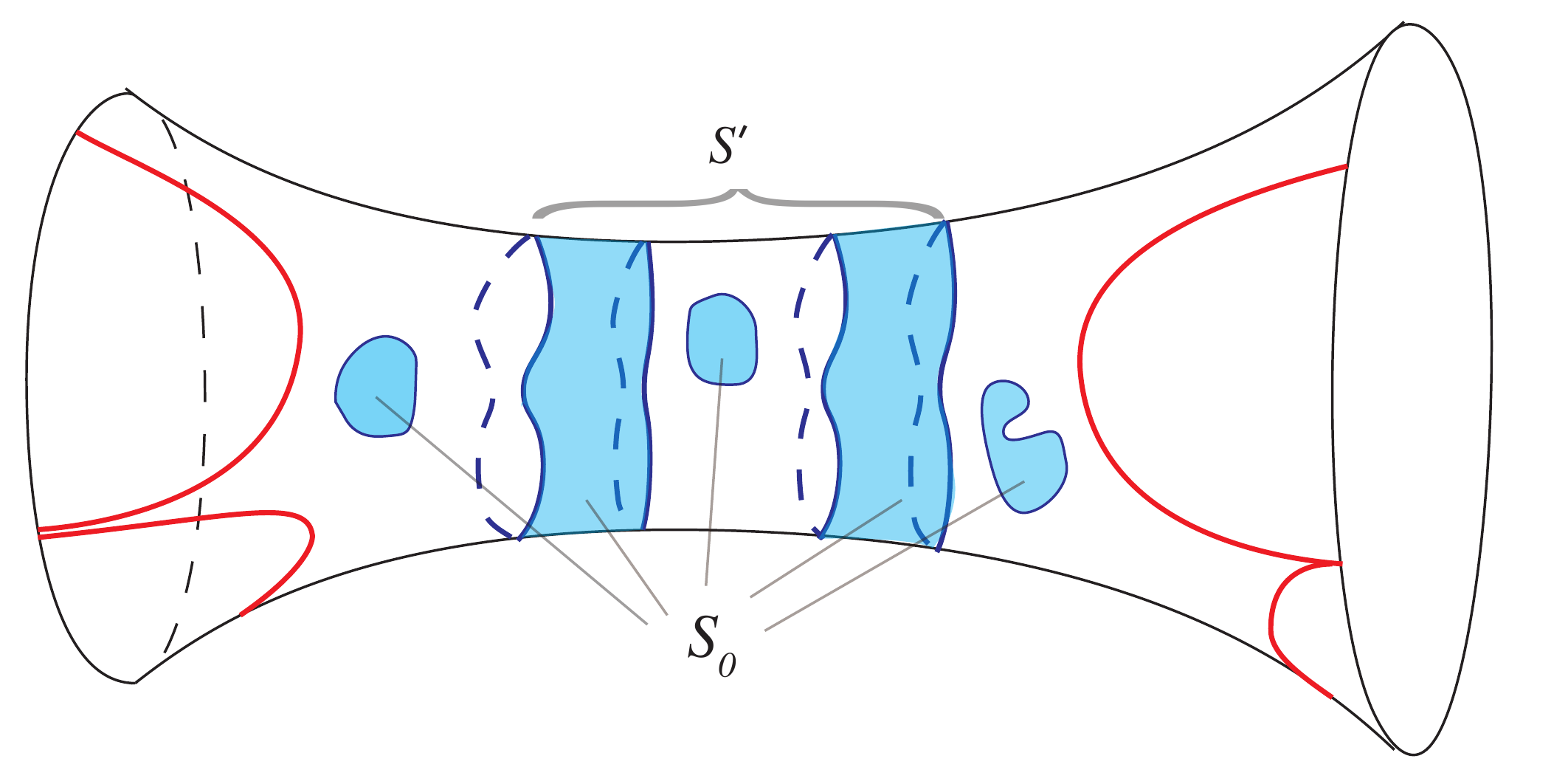}
\caption{The hyperbolic cylinder $S$, the convex subsurface $S^*$ bounded by red, and $S_0$}
\label{fig: cylinder}
\end{figure}

Boundaries of the cylinder components of $S_0$ are essential curves of $S^*$, all homotopic; let $S'$ be the region of $S^*$ bounded by the two of these boundary curves farthest into either end of $S$. In particular $S'$ is also a cylinder, and include all cylinder components of $S_0$. For each component of $S_0$ not contained in $S'$, since it is a disk with boundary in $\partial K$, we may homotope it rel boundary into $\partial K$, as $K\cong \overline{M}$ has incompressible boundary.

Therefore after changing $f$ further by a compact deformation, we may divide $S$ into $S'$ and two ends $S_1,S_2$ so that $f(S')$ is a cylinder with both boundary components on $\partial K$, and $f(S_1)$ and $f(S_2)$ are contained completely in $M-K$.

Let $E_1,E_2$ be the two components of $M-K$ containing $f(S_1)$ and $f(S_2)$. Note that the closed geodesic $\gamma$ contained in $P$ is homotopic to an essential curve $\gamma_i$ on $\partial E_i$ for $i=1,2$. Moreover, $E_i$ differs from an actual end of $M$ by a compact set, so any lift of $E_i$ is bounded at infinity by a connected component of $\Omega$. If we take the lift corresponding to $\tilde\gamma$,  let $\Omega_i$ be the connected component bounding the lift of $E_i$ at infinity. Since $p,q$ are the end points of $\tilde{\gamma_i}$, we must have $p,q\in\partial \Omega_i$. Finally, since $f$ differs from an isometric immersion by compact deformation, the lift of $f(S_i)$ is a half plane bounded by $\tilde{\gamma_i}$ and a component of $C-\{p,q\}$ at infinity; this half plane is totally geodesic except in a band of bounded width near $\tilde{\gamma_i}$. This lift is contained completely in the lift of $E_i$, so the corresponding component of $C-\{p,q\}$ is contained in $\overline{\Omega_i}$.
\end{proof}

Given a closed geodesic $\gamma\subset M$, there is an $S^1$-family of planes passing through $\gamma$. Fix any continuous parametrization $P_t$ of this family by $t\in \mathbb{R}$ invariant under translation by an integer. Consider the set
$$L:=\{t\in\mathbb{R}:P_t^*\text{ is a properly immersed cylinder in }M^*\}.$$
We claim
\begin{prop}
Suppose a sequence $\{t_i\}\subset L$ satisfies $t_i>t$ and $t_i\to t$ for some $t\in L$. Then there exists $\epsilon>0$ so that $[t,t+\epsilon]\subset L$ and $P_s$ is not exotic for any $s\in (t,t+\epsilon)$.
\end{prop}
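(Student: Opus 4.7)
The plan has three stages: identify the components of $\Omega$ containing the arcs of $C_t$ and $C_{t_i}$ (using the preceding lemma), show that for $s$ slightly larger than $t$ the arcs of $C_s$ lie strictly in the open interior of these components (a continuous deformation argument), and then conclude that $P_s$ is properly immersed hence closed in $M$, so not exotic.

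By the preceding lemma applied to $P_t$, there exist components $\Omega_1, \Omega_2$ of $\Omega$ with $\alpha_t \subset \overline{\Omega_1}$ and $\beta_t \subset \overline{\Omega_2}$, where $\alpha_t, \beta_t$ are the two arcs of $C_t \setminus \{p,q\}$. Applying the lemma to each $P_{t_i}$ yields components $\Omega_1^{(i)}, \Omega_2^{(i)}$. Since a round circular arc cannot lie entirely in the limit set of a non-Fuchsian convex cocompact group, the interior of $\alpha_t$ contains a point $x_0 \in \Omega_1$. The openness of $\Omega_1$ together with the Hausdorff convergence $\alpha_{t_i} \to \alpha_t$ then forces $\Omega_1^{(i)} = \Omega_1$ for all $i$ sufficiently large, and likewise $\Omega_2^{(i)} = \Omega_2$.

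The central step is a deformation argument. Parameterize the $S^1$-family of circles through $p, q$ by a real-analytic coordinate $s$, and let $N$ denote the normal field along $\alpha_t$ pointing in the direction $\alpha_s$ moves as $s$ increases through $t$. I claim that at every interior point $x \in \alpha_t \cap \partial\Omega_1$ the vector $N(x)$ points into the closed disk $\overline{\Omega_1}$: otherwise, for some $t_i > t$ close enough to $t$, the arc $\alpha_{t_i}$ would be displaced out of $\overline{\Omega_1}$ in a neighborhood of $x$, contradicting the containment $\alpha_{t_i} \subset \overline{\Omega_1}$ established above. Using this together with a local analysis of $\overline{\Omega_1}$ near the endpoints $p, q$, I would deduce the existence of $\epsilon > 0$ such that for every $s \in (t, t+\epsilon)$ the interior of $\alpha_s$ lies strictly inside the open set $\Omega_1$; equivalently, $\alpha_s \cap \Lambda = \{p, q\}$. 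The same argument applied to $\beta_t$ gives $\beta_s \cap \Lambda = \{p, q\}$.

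For such $s$, both arcs of $C_s$ lie in the closures of components, so $s \in L$, and they in fact lie in the open components away from $p, q$. Consequently, each half-plane of $\tilde P_s$ bounded by $\tilde\gamma$ has its boundary in $\Omega \cup \{p,q\}$; since $\Gamma$ acts properly discontinuously on $\mathbb{H}^3 \cup \Omega$ and $\Stab_\Gamma(\tilde\gamma)$ acts cocompactly on each half-plane, the half-planes project to properly embedded half-cylinders in $M$. Joined with the compact middle piece $P_s^*$, this exhibits $P_s$ as a properly immersed surface in $M$, hence closed in $M$, hence not exotic. The main obstacle is in the second stage: making the uniform deformation estimate rigorous along the whole arc, particularly near the endpoints $p, q$ where $\alpha_s$ necessarily meets $\Lambda$, and ruling out an accumulation of new tangency points of $\alpha_s$ with $\partial\Omega_1$ for arbitrarily small $s - t > 0$.
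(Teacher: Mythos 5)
Your first stage (identifying $\Omega_1,\Omega_2$ and showing the arcs of $C_{t_i}$ land in the same components for large $i$) and your last stage (proper discontinuity of $\Gamma$ on $\mathbb{H}^3\cup\Omega$ gives closedness of $P_s$ once $C_s\cap\Lambda=\{p,q\}$) both match the paper. But the central step is a genuine gap, and you have correctly diagnosed it yourself: a first-order normal-field argument cannot work here. Knowing that $N(x)$ points weakly into $\overline{\Omega_1}$ at each point of $\alpha_t\cap\partial\Omega_1$ gives no control at second order, the tangency set can be complicated and can accumulate at $p$ and $q$, and near $p,q$ the normal displacement of $\alpha_s$ degenerates to zero while $\Lambda$ accumulates there, so no uniform $\epsilon$ can be extracted from pointwise infinitesimal information about a fractal boundary. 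A symptom of the problem is that your argument barely uses the hypothesis that the $t_i$ lie in $L$ and approach $t$ from above --- only to pin down $\Omega_1^{(i)}=\Omega_1$ --- whereas that hypothesis is precisely what the correct argument leans on.

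The paper's route replaces the infinitesimal push by a sandwiching argument, which is both rigorous and shorter. Fix $i$ large. For $s\in(t,t_i)$ the arc $\alpha_s$ of $C_s\setminus\{p,q\}$ lies in the closed lens $U$ bounded by $\alpha_t\cup\alpha_{t_i}$, and both bounding arcs lie in $\overline{\Omega_1}$. Since $M$ has incompressible boundary, $\Omega_1$ is simply connected, so $\overline{\Omega_1}$ is a closed Jordan disk and $S^2\setminus\overline{\Omega_1}$ is connected; as this connected set is disjoint from $\partial U$ and meets the complement of $\overline{U}$, it cannot meet the open lens, whence the open lens lies in $\Omega_1$ and $C_s\cap\Lambda=\{p,q\}$ exactly. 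Taking $\epsilon$ with $t+\epsilon<t_i$ finishes the proof via your own final step. So you should discard the deformation argument entirely and instead exploit the two arcs $\alpha_t$ and $\alpha_{t_i}$ as a global barrier trapping $\alpha_s$ inside $\overline{\Omega_1}$.
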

\begin{proof}
Choose any lift $\tilde\gamma$ of the closed geodesic $\gamma$, and a boundary circle $C$ of $P_t$ passing through the end points $p,q$ of $\Gamma$. By the previous lemma, each of the two components of $C-\{p,q\}$ is contained in the closure of a connected component of the domain of discontinuity $\Omega$, say $\Omega_1$ and $\Omega_2$.

For each $i$, a boundary circle $C_i$ of $P_{t_i}$ also passes through $p,q$. Since $C_i\to C$ as $i\to\infty$, when $i$ is large enough, the two components of $C_i-\{p,q\}$ intersect $\overline{\Omega_1}$ and $\overline{\Omega_2}$ respectively. As $P_{t_i}^*$ is also a cylinder, those two components must be contained in $\overline{\Omega_1}$ and $\overline{\Omega_2}$ respectively.

Finally, since $M$ has incompressible boundary, $\Omega_1$ and $\Omega_2$ are both simply connected. In particular, for any $s\in (t,t_i)$, a boundary circle $C_s$ of $P_s$ is sandwiched between $C$ and $C_i$, and $C_s$ intersects the limit set at exactly $p$ and $q$. It is easy to see $P_s$ itself is closed in $M$.
\end{proof}
Similarly, a sequence $\{t_i\}\subset L$ tending to $t\in L$ from below gives a corresponding interval with $t$ as the right end point. We have
\begin{cor}
Every point in the set $\{t\in L:P_t\text{ is exotic}\}$ is isolated, and therefore this set is countable.
\end{cor}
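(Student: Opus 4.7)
My plan is to bootstrap the preceding proposition into an isolation statement, and then use the standard fact that an isolated subset of $\mathbb{R}$ is countable. The proposition (as stated) treats one-sided limits from above; the remark immediately before the corollary asserts an analogous statement for one-sided limits from below, so I may apply both.

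Concretely, suppose for contradiction that some $t_0 \in L$ with $P_{t_0}$ exotic is \emph{not} isolated in the set $E := \{t \in L : P_t \text{ is exotic}\}$. Then there is a sequence $\{t_i\} \subset E \setminus \{t_0\}$ with $t_i \to t_0$. Passing to a subsequence I may assume either $t_i > t_0$ for all $i$, or $t_i < t_0$ for all $i$. Treat the first case; the second is symmetric. Since each $t_i$ lies in $E \subset L$, and $t_0 \in L$, the hypotheses of the proposition are satisfied, so there exists $\epsilon > 0$ such that $[t_0, t_0 + \epsilon] \subset L$ and $P_s$ is not exotic for any $s \in (t_0, t_0+\epsilon)$. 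But for $i$ large enough, $t_i \in (t_0, t_0+\epsilon)$, contradicting $P_{t_i}$ being exotic. Hence every point of $E$ is isolated in $E$.

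An isolated subset of $\mathbb{R}$ is automatically at most countable: each point $t \in E$ admits an open interval $I_t \subset \mathbb{R}$ containing no other points of $E$, and picking a rational $q_t \in I_t$ yields an injection $E \hookrightarrow \mathbb{Q}$. This gives the corollary.

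The argument is essentially a packaging of the proposition, so no step is a serious obstacle. The one subtlety to check carefully is that the appeal to the proposition is legitimate: the proposition's hypothesis requires $t_0 \in L$ (not just $P_{t_0}$ exotic), but any exotic $t_0 \in L$ satisfies this tautologically. (For the broader Theorem~\ref{thm: countable} stated in the introduction, one would then sum over the countable set of closed geodesics $\gamma \subset M$ and the $S^1$-family of planes through each, and combine with the already-established countability of exotic planes whose $P^*$ is nonelementary; but that global summation lies outside the statement of this corollary.)
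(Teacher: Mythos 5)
Your argument is correct and is precisely the intended one: the paper states the corollary as an immediate consequence of the proposition (and its mirror for sequences from below), exactly as you have packaged it. The appeal to the proposition is legitimate since $E\subset L$ by definition, and the countability of an isolated subset of $\mathbb{R}$ via a choice of rationals is the standard argument.
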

Since $M$ contains countably many closed geodesics, we conclude that there are only countably many exotic cylinders. This, together with the fact that there are only countably many nonelementary $P^*$, gives Theorem~\ref{thm: countable}.

\bibliographystyle{math}
\bibliography{biblio}
\end{document}